\renewcommand{\epsilon}{\varepsilon}
\renewcommand{\limsup}{\varlimsup}
\renewcommand{\liminf}{\varliminf}
\numberwithin{equation}{section}
\newtheorem{theorem}{Theorem}[section]
\newtheorem{lemma}[theorem]{Lemma}
\newtheorem{corollary}[theorem]{Corollary}
\newtheorem{proposition}[theorem]{Proposition}
\newtheorem{question}[theorem]{Question}
\theoremstyle{definition}
\newtheorem{remark}[theorem]{Remark}
\newtheorem{notation}[theorem]{Notation}
\newcommand{\diam}{\mathrm{diam}}
\newcommand{\Haus}{\mathrm{dim}_{\mathrm{H}}\:}
\newcommand{\cH}{\mathcal{H}}
\newcommand{\PS}{\mathrm{PS}}
\title[Linear Diophantine equations in Piatetski-Shapiro sequences]{Linear Diophantine equations in \\ Piatetski-Shapiro sequences}
\author[T.\ Matsusaka]{Toshiki Matsusaka}
\address{Toshiki Matsusaka \\
Institute for Advanced Research\\ Nagoya University\\ Furo-cho\\ Chikusa-ku\\ Nagoya\\ 464-8602\\ Japan}
\curraddr{}
\email{matsusaka.toshiki@math.nagoya-u.ac.jp}
\author[K.\ Saito]{Kota Saito}
\address{Kota Saito\\
Graduate School of Mathematics\\ Nagoya University\\ Furo-cho\\ Chikusa-ku\\ Nagoya\\ 464-8602\\ Japan}
\curraddr{}
\email{m17013b@math.nagoya-u.ac.jp}
\subjclass[2010]{Primary:11K55, Secondary:11D04}
\keywords{Piatetski-Shapiro sequence, Hausdorff dimension, Diophantine equation, arithmetic progression, discrepancy}
\date{}
\begin{document}
\maketitle
\begin{abstract}
A Piatetski-Shapiro sequence with exponent $\alpha$ is a sequence of integer parts of $n^\alpha$ $(n = 1,2,\ldots)$ with a non-integral $\alpha > 0$. We let $\PS(\alpha)$ denote the set of those terms. In this article, we study the set of $\alpha$ so that the equation $ax + by = cz$ has infinitely many pairwise distinct solutions $(x,y,z) \in \PS(\alpha)^3$, and give a lower bound for its Hausdorff dimension. As a corollary, we find uncountably many $\alpha > 2$ such that $\PS(\alpha)$ contains infinitely many arithmetic progressions of length $3$. 
\end{abstract}

\section{Introduction}
Let $\lfloor x \rfloor$ denote the integer part of $x\in \mathbb{R}$. For a non-integral $\alpha>0$, the sequence $(\lfloor n^\alpha\rfloor)_{n=1}^\infty$ is called the Piatetski-Shapiro sequence with exponent $\alpha$ and let $\PS(\alpha)=\{\lfloor n^\alpha \rfloor \colon n\in \mathbb{N}\}$. We say that an equation 
$
f(x_1, \ldots , x_n)=0
$
is \textit{solvable} in $\PS(\alpha)$ if there are infinitely many pairwise distinct tuples $(x_1,\ldots,x_n)\in \PS(\alpha)^n$ satisfying this equation. In this article, we investigate the solvability in $\PS(\alpha)$ of linear Diophantine equations 
\begin{equation}\label{Equation-ax+by=cz}
ax+by=cz
\end{equation}
for all fixed $a,b,c \in \mathbb{N}$. For example, the solvability of the equation $y = \theta x + \eta$ for $\theta, \eta \in \mathbb{R}$ with $\theta \not\in \{0,1\}$ has been studied by Glasscock \cite{Glasscock17, Glasscock20}. He asserts that if the equation $y = \theta x + \eta$ has infinitely many solutions $(x,y) \in \mathbb{N}^2$, then for Lebesgue-a.e. $\alpha > 1$ it is solvable or not in $\PS(\alpha)$ according as $\alpha < 2$ or $\alpha > 2$. As a direct consequence, for Lebesgue-a.e. $1 < \alpha < 2$, the equation $z= (a/c)x + (b/c)$ is solvable in $\PS(\alpha)$ for all $a,b,c \in \mathbb{N}$ with $\gcd(a,c) | b$. In other words, the equation \eqref{Equation-ax+by=cz} with $\gcd(a,c)|b$ is solvable in $\PS(\alpha)$. On the other hand, for $\alpha > 2$, we did not know at all whether the equation \eqref{Equation-ax+by=cz} is solvable in $\PS(\alpha)$ or not.

Our main result provides an answer to this question. We consider the set of $\alpha$ in a short interval $[s,t] \subset (2, \infty)$ so that \eqref{Equation-ax+by=cz} is solvable. Then the following theorem asserts that its Hausdorff dimension is positive. To state the theorem, let $\{ x \}$ be the fractional part of $x \in \mathbb{R}$, and $\Haus X$ the Hausdorff dimension of $X \subseteq \mathbb{R}$, of which definition will be given in Section~\ref{Section-preparations}.

\begin{theorem}\label{Theorem-fractal_dimension}
Let $a,b,c\in \mathbb{N}$. For all positive real numbers $2<s<t$, we have
\begin{align*}
&\dim_\mathrm{H}(\{\alpha \in [s,t] \colon \text{$ax+by=cz$ is solvable in $\PS(\alpha)$} \}) \\
&\geq 
\begin{cases}
\displaystyle  \left(s + \frac{s^3}{(2+\{s\}-2^{1-\lfloor s\rfloor} )(2-\{s\}) } \right)^{-1}  & \text{if $a=b=c$}\\[10pt]
\displaystyle 2\left(s+\frac{s^3}{(2+\{s\}-2^{1-\lfloor s\rfloor} )(2-\{s\}) } \right)^{-1} & \text{otherwise}.
\end{cases}
\end{align*}
\end{theorem}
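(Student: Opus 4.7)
The plan is to build an explicit Cantor-like subset $\mathcal{C} \subset [s,t]$ every point of which is an $\alpha$ for which \eqref{Equation-ax+by=cz} is solvable in $\PS(\alpha)$, and then to estimate $\Haus \mathcal{C}$ from below via the mass distribution principle. I would work on dyadic scales: fix an increasing sequence $N_1 < N_2 < \cdots$ with $N_{k+1}$ enormous compared to $N_k$, and at the $k$-th level only consider triples $\mathbf{m} = (m_1,m_2,m_3) \in \mathbb{N}^3$ with each $m_i \asymp N_k$. The size gap between levels then automatically makes the resulting triples $(\lfloor m_1^\alpha\rfloor, \lfloor m_2^\alpha\rfloor, \lfloor m_3^\alpha\rfloor)$ pairwise distinct across levels, yielding infinitely many solutions for every $\alpha \in \mathcal{C}$.

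For a single triple $\mathbf{m}$ the relevant exponent set is
\[
A_\mathbf{m} = \{\alpha \in [s,t] : a \lfloor m_1^\alpha \rfloor + b \lfloor m_2^\alpha \rfloor = c \lfloor m_3^\alpha \rfloor\},
\]
and setting $F_\mathbf{m}(\alpha) = am_1^\alpha + bm_2^\alpha - cm_3^\alpha$, the identity $\lfloor x\rfloor = x - \{x\}$ turns membership in $A_\mathbf{m}$ into the combined condition $F_\mathbf{m}(\alpha) = a\{m_1^\alpha\} + b\{m_2^\alpha\} - c\{m_3^\alpha\} \in [-c,\, a+b]$. Because $|F_\mathbf{m}'(\alpha)| \asymp N^\alpha \log N$ when the components are of size $N$, the set $A_\mathbf{m}$ is a union of intervals of length $O(1/(N^\alpha \log N))$; this is the length scale that will drive the dimension bound.

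Next I would count the triples. Given a short subinterval $I \subset [s,t]$ and a pair $(m_1,m_2)$ of size $N$, the admissible integers $m_3$ are those close to $((am_1^\alpha + bm_2^\alpha)/c)^{1/\alpha}$ as $\alpha$ ranges over $I$; a Jacobian computation combined with van-der-Corput-style discrepancy bounds for $\{m^\alpha\}$ should show that a positive proportion of pairs $(m_1,m_2)$ produce such an $m_3$ and that the resulting $A_\mathbf{m}$'s are roughly uniformly spread across $I$. Iterating this across the scales $N_1<N_2<\cdots$ and intersecting the surviving intervals produces $\mathcal{C}$. The diagonal case $a=b=c$ forces $x+y=z$, killing one degree of freedom among $(m_1,m_2,m_3)$, which accounts for the factor $2$ by which the non-diagonal case improves in the theorem.

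Finally, with the uniform Cantor measure $\mu$ placing equal mass on the surviving level-$k$ intervals, a standard local-dimension estimate gives $\Haus \mathcal{C} \geq \liminf_{k \to \infty} \log M_k / \log(1/\delta_k)$, where $M_k$ is the number of level-$k$ intervals and $\delta_k$ their length. Plugging in $\delta_k \sim N_k^{-s}$ (the worst case in $[s,t]$) and the $M_k$ coming from the count yields a dimension bound of the form (effective branching exponent)$/s$, and a careful optimisation of the growth of $N_k$ produces the stated formula. The main technical obstacle lies in the counting step: the explicit denominator $(2+\{s\}-2^{1-\lfloor s\rfloor})(2-\{s\})$ is not generic, and I expect it to emerge from an Euler--Maclaurin expansion of sums like $\sum_{N \le m < 2N} m^\alpha$ carried to order $\lfloor s \rfloor$ (the boundary correction at $m=N$ vs.\ $m=2N$ producing the $2^{1-\lfloor s\rfloor}$) together with a stationary-phase bound on the resulting exponential sums whose amplitude is governed by $2 - \{s\}$. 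Pinning down both factors with their correct constants is the delicate part of the argument.
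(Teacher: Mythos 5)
Your high-level plan (build a nested Cantor set of exponents, with each level coming from solutions at a new integer scale, then bound the dimension from below via a Jarník-type covering argument) is in the right spirit, and it matches the paper's overall strategy. But the engine that drives the paper's proof is missing, and the spots where you hand-wave are exactly where the hard work lies.

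The essential missing idea is the dilation trick. The paper does \emph{not} count balanced integer triples $(m_1,m_2,m_3)\asymp N$ satisfying $a\lfloor m_1^\alpha\rfloor + b\lfloor m_2^\alpha\rfloor = c\lfloor m_3^\alpha\rfloor$ directly. Instead (Lemmas \ref{Lemma-alpha(x,z)_1_2}, \ref{Lemma-alpha(x,z)}) it first solves the \emph{real} equation $aX^\alpha + bY^\alpha = cZ^\alpha$ exactly, with $X=x$ a prime, $Z=z$ chosen from a short structured window $J_{a,b,c}(x)$ next to $(a/c)^{1/\alpha}x$, and $Y$ a constant (or a near-equal companion to $X$ in the $a=b=c$ case); the free parameter is $\alpha = \alpha(x,z)$ itself, solved for by the intermediate value theorem. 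Only then does Lemma \ref{Lemma-n_0} produce an integer $n_0\ll (X+Y)^{r}$ so that the dilated triple $(n_0X,n_0Y,n_0Z)$ satisfies the floor equation, by showing that the fractional parts $\{(n_0W)^\alpha\}$ can all be forced simultaneously small. This dilation-by-$n_0$ step is what converts the continuous solution into a genuine element of $\PS(\alpha)^3$, and its cost $n_0\ll x^r$ is precisely what produces the denominator $q=(\gamma+\epsilon)(r+1+\epsilon)$, hence the final formula. Without it, if you insist on balanced triples $m_i\asymp N$ with the floor equation holding, the heuristic count is $\asymp N^{3-\alpha}$ for a fixed $\alpha$, which vanishes for $\alpha\geq 3$: the construction would simply produce nothing in most of the range the theorem covers. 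Relatedly, your claim that ``a positive proportion of pairs $(m_1,m_2)$ produce such an $m_3$'' cannot be read at fixed $\alpha$; what you would actually need is a uniform lower bound, in every putative parent interval, for the number of well-separated $\alpha$-windows, and that is precisely the content of Lemmas \ref{Lemma-distance_of_intervals}, \ref{Lemma-distance_of_intervals_of_H_U}, and \ref{Lemma-the_number_of_intervals} (which rely on primality of $x$ and condition (C1) to get the $U^{-2}$-spacing).

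Two more specific misidentifications. First, the exponent $(2+\{s\}-2^{1-\lfloor s\rfloor})(2-\{s\})$ does not come from an Euler--Maclaurin expansion or from stationary phase; it comes from van der Corput's $k$-th derivative test (Lemma \ref{Lemma-van_der_Corput}) applied with $k=\lfloor s\rfloor+2$: the $2^{1-\lfloor s\rfloor}$ is $2\cdot 2^{2-k}$ from the two competing exponents $1/(2^k-2)$ and $2^{2-k}$ in the van der Corput bound, and the $2-\{s\}$ is $-(\alpha-(d+2))$ from the size of $f^{(d+2)}$ on the range of summation. Moreover the discrepancy step is not a single equidistribution statement: the proof of Lemma \ref{Lemma-n_0} splits into a ``major arc'' case where the linear form $L(h_1,h_2)=(h_1X^\alpha+h_2Y^\alpha)/c$ is never tiny and a ``minor arc'' case where it is, and these require different constructions of the set $A$; you do not mention this dichotomy at all. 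Second, your explanation of the factor of $2$ between the two cases (``killing one degree of freedom among $(m_1,m_2,m_3)$'') is not what happens: in the $a\neq c$ cases the underlying triple before dilation is $(x,1,z)$ of size $\asymp x$, but when $a=b=c$ the paper must take $X=x^2\lceil\log x\rceil$, $Y=X+1$, $Z=zx\lceil\log x\rceil$ of size $\asymp x^2\log x$ (because $x^\alpha + x^\alpha = z^\alpha$ has no solution, so $Y$ must be a near-equal integer distinct from $X$), which doubles the exponent in the bound on $n_0$ and hence in $q$. So the factor of $2$ comes from the size of the base triple, not from a lost counting variable.
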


The positiveness of the Hausdorff dimension implies that this set is uncountable for any closed interval $[s, t] \subset (2, \infty)$. Moreover, we can easily conclude the following.

\begin{corollary}\label{Corollary-topological_result}
For any closed interval $I\subset (2,\infty)$, the set of $\alpha \in I$ such that $ax+by=cz$ is solvable in $\PS(\alpha)$ is uncountable and dense in $I$.
\end{corollary}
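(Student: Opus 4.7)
The plan is to deduce the corollary as an immediate consequence of Theorem~\ref{Theorem-fractal_dimension}, using only two auxiliary facts: first, that the lower bound appearing in that theorem is strictly positive whenever $s>2$; and second, that every countable subset of $\mathbb{R}$ has Hausdorff dimension zero.

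I would begin by verifying positivity of the bound. Writing $I=[s_0,t_0]\subset (2,\infty)$ and setting $s=s_0$, we have $\lfloor s\rfloor\geq 2$, hence $2^{1-\lfloor s\rfloor}\leq 1/2$. Consequently
\[
2+\{s\}-2^{1-\lfloor s\rfloor}\;\geq\; \tfrac32 \qquad \text{and} \qquad 2-\{s\}\;>\;1,
\]
so the denominator $(2+\{s\}-2^{1-\lfloor s\rfloor})(2-\{s\})$ is strictly positive, and therefore the expression displayed in Theorem~\ref{Theorem-fractal_dimension} is a strictly positive real number. Applying the theorem with $t=t_0$ yields that the set
\[
S_I := \{\alpha \in I \colon ax+by=cz \text{ is solvable in } \PS(\alpha)\}
\]
has positive Hausdorff dimension. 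Since any countable set has Hausdorff dimension zero, this already forces $S_I$ to be uncountable.

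For density, I would let $J$ be an arbitrary nonempty relatively open subinterval of $I$ and choose a closed subinterval $[s_1,t_1]\subset J$ with $2<s_1<t_1$; this is possible because $I\subset(2,\infty)$ has nonempty interior to the right of $2$. Applying the previous paragraph with $[s_1,t_1]$ in place of $I$ gives $\Haus S_{[s_1,t_1]}>0$, hence $S_{[s_1,t_1]}\neq\emptyset$, hence $S_I\cap J\neq\emptyset$. As $J$ was arbitrary, $S_I$ is dense in $I$. I do not anticipate any genuine obstacle: all of the analytic work is already contained in Theorem~\ref{Theorem-fractal_dimension}, and the corollary is a purely formal consequence once one observes that the denominator in the lower bound cannot vanish for $s>2$.
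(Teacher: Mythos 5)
Your proof is correct and follows essentially the same route the paper takes: it observes that the lower bound in Theorem~\ref{Theorem-fractal_dimension} is strictly positive for every $s>2$ (the denominator cannot vanish since $2^{1-\lfloor s\rfloor}\le \tfrac12$ and $\{s\}<1$), deduces uncountability from the fact that countable sets have Hausdorff dimension zero, and obtains density by applying the theorem to arbitrarily small closed subintervals. The paper states this as an easy consequence without spelling out the details; your write-up simply makes them explicit.
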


In particular, for $a=b=1, c=2$, a pairwise distinct tuple $(x,z,y)$ satisfying \eqref{Equation-ax+by=cz} forms an arithmetic progression of length $3$. Therefore Corollary~\ref{Corollary-topological_result} implies 

\begin{corollary}\label{Corollary-APs} 
For any closed interval $I\subset (2,\infty)$, the set of $\alpha \in I$ such that $\PS(\alpha)$ contains infinitely many arithmetic progressions of length $3$ is uncountable and dense in $I$.
\end{corollary}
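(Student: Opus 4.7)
My plan is to deduce the corollary as a direct specialization of Corollary~\ref{Corollary-topological_result}. The key observation, already flagged in the text, is that for $(a,b,c) = (1,1,2)$ a triple $(x,y,z) \in \PS(\alpha)^3$ satisfying $x + y = 2z$ with $x \ne y$ produces the three-term arithmetic progression $x,\,z,\,y$ (or its reversal) with common difference $z - x = y - z \ne 0$.

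First, I would invoke Corollary~\ref{Corollary-topological_result} with $(a,b,c) = (1,1,2)$: for any closed interval $I \subset (2,\infty)$ this yields an uncountable dense set $E \subseteq I$ such that for every $\alpha \in E$ the equation $x + y = 2z$ has infinitely many pairwise distinct solutions $(x,y,z) \in \PS(\alpha)^3$. Second, I would use the algebraic dichotomy that if any two of $x,y,z$ are equal then all three are equal: so each solution is either a diagonal solution $(k,k,k)$ or has pairwise distinct entries, and only the latter correspond to genuine three-term APs in $\PS(\alpha)$. Third, since two distinct triples yield distinct unordered three-term APs up only to the swap of the outer two entries, infinitely many non-diagonal solutions translate to infinitely many APs of length three, as required.

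The only substantive point is step two: verifying that Corollary~\ref{Corollary-topological_result} for $(1,1,2)$ really furnishes infinitely many non-diagonal solutions, not merely the automatically present family $\{(k,k,k) : k \in \PS(\alpha)\}$. This is where I expect the main obstacle to lie. I would handle it by either reading off from the construction underlying Theorem~\ref{Theorem-fractal_dimension} that the triples built there automatically satisfy $x \ne y$, or by arguing that the paper's notion of \emph{solvable} tacitly excludes the diagonal contributions (since otherwise the bound in Theorem~\ref{Theorem-fractal_dimension} would be vacuous in the $a+b=c$ case). Either route completes the deduction, after which the uncountability and density of $E$ transfer verbatim to the set of $\alpha \in I$ admitting infinitely many three-term APs in $\PS(\alpha)$.
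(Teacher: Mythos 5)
Your deduction is correct and coincides with the paper's one-line argument: take $(a,b,c)=(1,1,2)$ in Corollary~\ref{Corollary-topological_result} and read off the three-term progression $x,z,y$ from each solution of $x+y=2z$. The only superfluous step is your worry about diagonal solutions, which is preempted not tacitly but explicitly by the paper's conventions — the Notation defines a tuple $(x_1,\ldots,x_k)$ to be \emph{pairwise distinct} when $\#\{x_1,\ldots,x_k\}=k$, and \emph{solvable} is defined to require infinitely many pairwise distinct tuples, so $(k,k,k)$ is excluded by definition.
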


 There are some related works on arithmetic progressions and Piatetski-Shapiro sequences. Frantzikinakis and Wierdl \cite{FrantzikinakisWierdl} showed that $\PS(\alpha)$ contains arbitrarily long arithmetic progressions for all $1<\alpha<2$. They also considered any set of positive integers with positive upper density, and proved that every such set contains arbitrarily long arithmetic progressions whose gap difference belongs to $\PS(\alpha)$ for all non-integral $\alpha>1$. Here we say that $A\subseteq \mathbb{N}$ has positive upper density if $\limsup_{N\rightarrow \infty}|A\cap \{1,\ldots,N\}|/N>0 $.  This result is one of extensions of Szemer\'edi's theorem \cite{Szemeredi}. Furthermore, the second author and Yoshida \cite{SaitoYoshida} gave another extension of Szemer\'edi's theorem to Piatestki-Shapiro sequences. They showed that for any $A\subseteq \mathbb{N}$ with positive upper density, the set $\{\lfloor n^\alpha \rfloor \colon n\in A\}$ with $1 < \alpha < 2$ contains arbitrarily long arithmetic progressions, and posed a question.
 
\begin{question}[{\cite[Question~13]{SaitoYoshida}}]\label{Question-SY}
Is it ture that
\[
\sup\{ \alpha \ge 1 \colon \text{$ \PS(\alpha)$ contains arbitrarily long arithmetic progressions}\}=2?
\]
\end{question}
We do not get any answer to this question here, but surprisingly by Corollary~\ref{Corollary-APs}, the supremum of $\alpha$ such that $\PS(\alpha)$ contains infinitely many arithmetic progressions of length $3$ is positive infinity. Glasscock also posed a related question to the equation \eqref{Equation-ax+by=cz} for $a=b=c=1$.
\begin{question}[{\cite[Question~6]{Glasscock17} }]\label{Question-Glasscock}
Does there exist an $\alpha_S>1$ with the property that for Lebesgue-a.e. or all $\alpha>1$, the equation $x + y = z$ is solvable or not in $\PS(\alpha)$ according as $\alpha < \alpha_S$ or $\alpha > \alpha_S$?
\end{question} 
By Corollary~\ref{Corollary-topological_result}, the case with ``all $\alpha>1$'' in Question~\ref{Question-Glasscock} is false since the supremum of $\alpha>0$ such that \eqref{Equation-ax+by=cz} is solvable in $\PS(\alpha)$ is positive infinity.  However, the case with ``Lebesgue-a.e.'' in Question~\ref{Question-Glasscock} is still open.

The rest of the article is organized as follows. First in Section \ref{Section-preparations} we define the discrepancy of the sequences and the Hausdorff dimension, and describe some known useful results. In Section \ref{Section-proofs_of_lemmas} and \ref{Section-dimension}, we prove a series lemmas. Finally we provide a proof of Theorem~\ref{Theorem-fractal_dimension}.

\begin{notation}
Let $\mathbb{N}$ be the set of all positive integers, $\mathbb{Z}$ the set of all integers, $\mathbb{Q}$ the set of all rational numbers, and $\mathbb{R}$ the set of all real numbers. For $x\in \mathbb{R}$, let $\lfloor x\rfloor $ denote the integer part of $x$, $\{x\}$ denote the fractional part of $x$, and $\lceil x \rceil$ denote the minimum integer $n$ such that $x\leq n$.  A tuple $(x_1,\ldots, x_k)\in \mathbb{R}^k$ is called pairwise distinct if $\#\{x_1,\ldots , x_k\} =k$. Let $\sqrt{-1}$ denote the imaginary unit, and define $e(x)$ by $e^{2\pi \sqrt{-1}x} $ for all $x\in \mathbb{R}$.
 \end{notation}

\section{Preparations}\label{Section-preparations}

For all $\mathbf{x}=(x_1,x_2,\ldots, x_d)\in \mathbb{R}^d$, define $\{\mathbf{x}\} = (\{x_1\}, \{x_2\},\ldots, \{x_d\})$.
Let $(\mathbf{x}_n)_{1\leq n\leq N}$ be a sequence composed of $ \mathbf{x}_n\in \mathbb{R}^d$ for all $1\leq n\leq N$. We define the \textit{discrepancy} of the sequence $(\mathbf{x}_n)_{1\leq n\leq N}$ by  
\[
D(\mathbf{x}_1,\ldots, \mathbf{x}_N)=\sup_{\substack{0\leq a_i<b_i\leq 1 \\ {1\leq i\leq d}}} \left| \frac{\#\left\{ n\in \mathbb{N}\cap[1,N] \colon \{\mathbf{x}_n\} \in \prod _{i=1}^d [a_i,b_i) \right\}}{N}- \prod_{i=1}^d (b_i-a_i)  \right|.
\]
In order to evaluate an upper bound for the discrepancy,  we use the following inequality which is shown by Koksma \cite{Koksma} and Sz\"{u}sz \cite{Szusz} independently: there exists $C_d>0$ which depends only on $d$ such that for all $K\in \mathbb{N}$, we have
\begin{equation}\label{Inequality-Koksma_Szusz}
D(\mathbf{x}_1,\ldots, \mathbf{x}_N)
\leq C_d\left( \frac{1}{K}
+\sum_{\substack{0<\|\mathbf{k}\|_\infty \leq K \\ \mathbf{k}\in \mathbb{Z}^d}}\frac{1}{\nu(\mathbf{k})} \left|\frac{1}{N} \sum_{n=1}^N e^{2\pi \sqrt{-1} \langle \mathbf{k},\mathbf{x}_n\rangle} \right| \right), 
\end{equation}
where we let $\langle \cdot ,\cdot\rangle$ denote the standard inner product and define
\[
\|\mathbf{k}\|_\infty=\max\{|k_1|,\ldots, |k_d|\},\quad \nu(\mathbf{k})=\prod_{i=1}^d \max\{1,|k_i|\}.
\]
This inequality is sometimes reffered as the Erd\H{o}s-Tur\'an-Koksma inequality. We refer \cite{DrmotaTichy} to the readers for more details on discrepancies and a proof of \eqref{Inequality-Koksma_Szusz}. This inequality reduces the estimate of the discrepancy to that of exponential sums. Furthermore, the exponential sum is evaluated by the following lemma.

\begin{lemma}[van der Corput's $k$-th derivative test]\label{Lemma-van_der_Corput}
Let $f(x)$ be real and have continuous derivative up to $k$-th order, where $k\geq 4$. Let $\lambda_k \leq f^{(k)} (x) \leq h \lambda_k$ $($or the same as for $-f^{(k)}(x))$. Let $b-a\geq 1$. Then there exists $C(h,k)>0$ such that 
\[
\left|\sum_{a<n\leq b} e^{2\pi \sqrt{-1} f(n)} \right| \leq C(h,k) \left((b-a)\lambda_k^{1/(2^k-2)}+(b-a)^{1-2^{2-k}}\lambda_k^{-1/(2^k-2)} \right).
\] 
\end{lemma}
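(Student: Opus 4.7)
The plan is to establish the statement by induction on $k$, starting from the classical second derivative test and climbing up via the Weyl--van der Corput $A$-process.

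For the base case $k=2$, I would prove the bound $\sum_{a<n\le b} e^{2\pi\sqrt{-1}f(n)} \ll (b-a)\lambda_2^{1/2}+\lambda_2^{-1/2}$ directly. The idea is to partition the summation interval into blocks on which the first derivative $f'$ varies by at most one, using the monotonicity of $f'$ that follows from $f''$ having constant sign. On each such block, Kuzmin--Landau's lemma (or a direct saddle-point argument comparing the sum to an integral via Euler--Maclaurin) produces an $O(1)$ estimate, and the number of blocks is controlled by the total variation of $f'$, which is $\asymp (b-a)\lambda_2$.

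For the inductive step, assuming the $(k-1)$-th derivative test, I would apply the Weyl--van der Corput inequality
\[
\left|\sum_{a<n\le b} e^{2\pi\sqrt{-1}f(n)}\right|^2 \ll \frac{(b-a)^2}{H} + \frac{b-a}{H}\sum_{1\le h\le H} \left|\sum_{n} e^{2\pi\sqrt{-1}(f(n+h)-f(n))}\right|
\]
for a parameter $1\le H\le b-a$. The new phase $g_h(x)=f(x+h)-f(x)$ satisfies $g_h^{(k-1)}(x)=h\,f^{(k)}(\xi)$ for some $\xi$, hence has $(k-1)$-th derivative of size $\asymp h\lambda_k$. The inductive hypothesis bounds each inner sum in terms of $h\lambda_k$; summing over $h\le H$, taking a square root, and optimizing the free parameter $H$ produces the recursion on exponents: the balance of the two terms in the bound passes $\lambda^{1/(2^{k-1}-2)}$ to $\lambda^{1/(2^k-2)}$, and correspondingly $(b-a)^{1-2^{3-k}}$ to $(b-a)^{1-2^{2-k}}$, matching the exponents in the statement.

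The main obstacle is the bookkeeping of the constants $C(h,k)$ through the recursion and the verification that the diagonal contribution $(b-a)^2/H$ from $h=0$ combines correctly with the off-diagonal estimates after the optimal choice of $H$. Since this lemma is a standard tool in analytic number theory and is not the central novelty of the present article, I would in practice simply quote a textbook treatment such as Graham and Kolesnik, \emph{Van der Corput's Method of Exponential Sums}, Theorem~2.8 (or Titchmarsh, \emph{The Theory of the Riemann Zeta-Function}, Theorem~5.13), rather than reproduce the full induction here.
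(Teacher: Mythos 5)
Your proposal is correct and ultimately coincides with the paper, which simply cites Titchmarsh, \emph{The Theory of the Riemann Zeta-Function}, Theorem~5.13 without reproducing the argument. Your sketch of the induction via the Weyl--van der Corput $A$-process (with the $k=2$ base case from the Kuzmin--Landau/second-derivative test) is the standard textbook proof of this lemma, and the exponent bookkeeping $1/(2^{k-1}-2)\to 1/(2^k-2)$ and $1-2^{3-k}\to 1-2^{2-k}$ that you describe is correct.
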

\begin{proof}
See Titchmarsh's book \cite[Theorem~5.13]{Titchmarsh}.
\end{proof}

We next introduce the Hausdorff dimension. For every $U\subseteq \mathbb{R}$, write the diameter of $U$ by $\diam(U)=\sup_{x,y\in U}|x-y|$. Fix $\delta>0$. For all $F\subseteq \mathbb{R}$ and $s\in [0,1]$, we define
\[
\cH_{\delta}^s (F)=\inf \left\{\sum_{j=1}^\infty \diam(U_j)^s\colon F\subseteq \bigcup_{j=1}^\infty U_j,\ (\forall j\in \mathbb{N})\ \diam(U_j)\leq \delta   \right\},
\]
and $\cH^s(F)=\lim_{\delta\rightarrow +0} \cH_\delta^s(F)$ is called the $s$-\textit{dimensional Hausdorff measure} of $F$. Further,
\[
\Haus F=\inf \{s\in [0,1] \colon \mathcal{H}^s(F)=0 \}
\]
is called the \textit{Hausdorff dimension} of $F$. Note that the Hausdorff dimension can be defined on all metric spaces, but we use only $\mathbb{R}$ in this article. By the definition, the following basic properties hold: 
\begin{itemize}
\item (Monotonicity) for all $F\subseteq E \subseteq \mathbb{R}$, we have $\Haus F \leq \Haus E$;
\item (Countable stability) if $F_1, F_2, \ldots \subseteq \mathbb{R} $ is a countable sequence of sets, then we have $\Haus \bigcup_{n=1}^\infty F_n =\sup_{n\in \mathbb{N}} \Haus F_n$.   
\end{itemize}

We refer Falconer's book \cite{Falconer} for the readers who want to know more details on fractal dimensions.
In order to prove Theorem~\ref{Theorem-fractal_dimension}, we construct a general Cantor set which is a subset of the set of all $\alpha$ such that \eqref{Equation-ax+by=cz} is solvable in $\PS(\alpha)$. In Falconer's book \cite[(4.3)]{Falconer}, we can see a general construction of Cantor sets and a technique to evaluate the Hausdorff dimension of them as follows: Let $[0,1]=E_0\supseteq E_1 \supseteq E_2 \cdots$ be a decreasing sequence of sets, with each $E_k$ a union of a finite number of disjoint closed intervals called $k$-th \textit{level basic intervals}, with each interval of $E_k$ containing at least two intervals of $E_{k+1}$, and the maximum length of $k$-th level intervals tending to $0$ as $k\rightarrow \infty$. Then let
\begin{equation}\label{Equation-Cantor_set}
F= \bigcap_{k=0}^\infty E_k.
\end{equation} 

\begin{lemma}[{\cite[Example~4.6 (a)]{Falconer}}]\label{Lemma-lower_bounds_for_dimension_origin}
Suppose in the general construction \eqref{Equation-Cantor_set} each $(k-1)$-st level interval contains at least $m_k\geq 2$ $k$-th level intervals $(k=1,2,\ldots)$ which are separated by gaps of at least $\delta_k$, where $0<\delta_{k+1}<\delta_k$ for each $k$. Then  
\[
\Haus F \geq \liminf_{k\rightarrow \infty} \frac{\log m_1\cdots m_{k-1}}{-\log(m_k\delta_k)}.
\]
\end{lemma}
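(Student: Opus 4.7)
The plan is to prove the lower bound via the standard mass distribution principle applied to the natural measure supported on $F$. First I would construct a Borel probability measure $\mu$ supported on $F$: for each $k$-th level basic interval $I$, assign mass $\mu(I)=(m_1 m_2\cdots m_k)^{-1}$. Concretely, within each $(k-1)$-st level interval pick exactly $m_k$ children (at least this many are available by hypothesis) and distribute the parent's mass equally among them; the Kolmogorov/Carath\'eodory extension theorem then yields a Borel probability measure supported on the intersection $F$.

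Next, I would estimate $\mu(U)$ for an arbitrary set $U\subseteq\mathbb{R}$ of small diameter. Given such $U$, let $k\geq 1$ be the unique index with $\delta_k\leq \diam(U)<\delta_{k-1}$, where we set $\delta_0:=+\infty$. Since consecutive $(k-1)$-st level basic intervals are separated by gaps of at least $\delta_{k-1}>\diam(U)$, the set $U$ meets at most one such parent interval $J$. Inside $J$, its $k$-th level children are separated by gaps of at least $\delta_k$, so $U$ meets at most $\lfloor\diam(U)/\delta_k\rfloor+1\leq 2\diam(U)/\delta_k$ of them. Combining with the global bound $\mu(J)=(m_1\cdots m_{k-1})^{-1}$, we obtain
\[
\mu(U)\leq \frac{1}{m_1\cdots m_{k-1}}\min\!\left(1,\;\frac{2\diam(U)}{m_k\delta_k}\right).
\]
The elementary inequality $\min(1,x)\leq x^s$ valid for $x>0$ and $s\in[0,1]$ then yields
\[
\mu(U)\leq \frac{1}{m_1\cdots m_{k-1}}\cdot\left(\frac{2\diam(U)}{m_k\delta_k}\right)^s.
\]

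Now fix any $s$ strictly less than $s_\star:=\liminf_{k\to\infty}\log(m_1\cdots m_{k-1})/(-\log(m_k\delta_k))$; we may restrict to $s\in[0,1]$, since a Hausdorff dimension in $\mathbb{R}$ is at most $1$. By definition of $\liminf$, for all sufficiently large $k$ we have $(m_1\cdots m_{k-1})^{-1}<(m_k\delta_k)^s$, so whenever $\diam(U)$ is small enough the associated $k$ is large enough that the previous estimate simplifies to $\mu(U)\leq 2^s\diam(U)^s$. Applying this to any countable cover $\{U_j\}$ of $F$ by sets of diameter at most $\delta$ gives $1=\mu(F)\leq\sum_j\mu(U_j)\leq 2^s\sum_j\diam(U_j)^s$, whence $\cH_\delta^s(F)\geq 2^{-s}>0$. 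Letting $\delta\to 0$ and then $s\uparrow s_\star$ yields $\dim_{\mathrm{H}} F\geq s_\star$.

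The main technical subtlety is the two-regime bound on $\mu(U)$: the cardinality estimate $(m_1\cdots m_{k-1})^{-1}$ is sharp when $\diam(U)$ is close to $\delta_{k-1}$, while the Lebesgue-type estimate $2\diam(U)/(m_k\delta_k\cdot m_1\cdots m_{k-1})$ is sharp when $\diam(U)$ is close to $\delta_k$. Interpolating between them via $\min(1,x)\leq x^s$ is what produces exactly the exponent $s_\star$ appearing in the lemma; everything else is the routine mass distribution principle.
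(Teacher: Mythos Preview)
Your argument is correct and is precisely the mass distribution approach given in Falconer's book, which is all the paper invokes here (the lemma is stated with a citation to \cite[Example~4.6 (a)]{Falconer} and no proof is reproduced). One small point worth making explicit: the hypothesis only guarantees gaps of $\delta_{k-1}$ between $(k-1)$-st level intervals sharing a common $(k-2)$-nd level parent, but a short induction using $\delta_{k-2}>\delta_{k-1}$ shows that \emph{all} $(k-1)$-st level intervals are separated by at least $\delta_{k-1}$, which is what you use when asserting that $U$ meets at most one of them.
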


Since the Hausdorff dimension is stable under similarity transformations, the initial interval $E_0$ may be taken arbitrarily closed set. Moreover, let $E'_k$ be the set of inner points of $E_k$ for all $k \in \mathbb{N}$. Then the Hausdorff dimension of $\bigcap_{k =0}^\infty E'_k$ is equal to that of $\bigcap_{k =0}^\infty E_k$. In fact, let $\mathcal{N}_k$ be the boundary of $E_k$, that is, the set of all end points of $k$-th level intervals. We easily see that
\[
	\mathcal{N} := F \setminus \left(\bigcap_{k=0}^\infty E'_k \right) \subset \bigcup_{k=0}^\infty \mathcal{N}_k =: \mathcal{N}_\infty.
\]
Since each $\mathcal{N}_k$ is a finite set, the set $\mathcal{N}_\infty$ is a countable set. By the monotonicity, and the fact that the Hausdorff dimension of a countable set is $0$, we get
\[
	0 \leq \Haus \mathcal{N} \leq \Haus \mathcal{N}_\infty = 0,
\]
that is, $\Haus \mathcal{N} = 0$. Therefore by the countable stability, we have
\begin{align*}
	\Haus F = \max \left\{ \Haus \bigg(\bigcap_{k=0}^\infty E'_k\bigg), \Haus \mathcal{N} \right\} = \Haus \bigg(\bigcap_{k=0}^\infty E'_k\bigg).
\end{align*}
By summarizing this discussion, we have the following:

 \begin{lemma}\label{Lemma-lower_bounds_for_dimension}
Let $E_0$ be any open interval, and let $E_0\supseteq E_1 \supseteq E_2 \cdots$ be a decreasing sequence of sets, with each $E_k$ a union of a finite number of disjoint open intervals, and the maximum length of $k$-th level intervals tending to $0$ as $k\rightarrow \infty$. Suppose each $(k-1)$-st level interval contains at least $m_k\geq 2$ $k$-th level intervals $(k=1,2,\ldots)$ which are separated by gaps of at least $\delta_k$, where $0<\delta_{k+1}<\delta_k$ for each $k$. Then  
\[
\Haus \bigcap_{k=0}^\infty E_k  \geq \liminf_{k\rightarrow \infty} \frac{\log m_1\cdots m_{k-1}}{-\log(m_k\delta_k)}.
\]
\end{lemma}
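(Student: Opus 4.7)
The plan is to reduce to the closed-interval version, Lemma~\ref{Lemma-lower_bounds_for_dimension_origin} from Falconer's book, by passing to closures. First I would let $\overline{E}_k$ denote the union of the closures of the finitely many disjoint open intervals that constitute $E_k$. Since $\delta_k>0$, closing each open interval preserves both disjointness and the size of the gap between consecutive intervals (gaps being measured as the distance between disjoint sets), and the diameter of a closed interval equals that of its open counterpart. Therefore $\overline{E}_0\supseteq \overline{E}_1 \supseteq \cdots$ is a decreasing sequence of unions of finitely many disjoint closed intervals satisfying every hypothesis of Lemma~\ref{Lemma-lower_bounds_for_dimension_origin}, after an affine transformation sending $\overline{E}_0$ inside $[0,1]$ — such a similarity leaves the Hausdorff dimension invariant. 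Applying Lemma~\ref{Lemma-lower_bounds_for_dimension_origin} to $\overline{F}:=\bigcap_{k=0}^\infty \overline{E}_k$ then yields
\[
\Haus \overline{F} \geq \liminf_{k\to\infty} \frac{\log m_1 \cdots m_{k-1}}{-\log(m_k \delta_k)}.
\]

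Next I would show $\Haus F = \Haus \overline{F}$, where $F := \bigcap_{k=0}^\infty E_k$. Monotonicity applied to $F\subseteq \overline{F}$ gives $\Haus F \leq \Haus \overline{F}$. For the reverse inequality, any point of $\overline{F}\setminus F$ must fail to lie in some $E_k$ while lying in $\overline{E}_k$, so it is an endpoint of one of the finitely many open intervals making up $E_k$. Writing $\mathcal{N}_k$ for this finite endpoint set, we obtain the inclusion
\[
\overline{F}\setminus F \;\subseteq\; \bigcup_{k=0}^\infty \mathcal{N}_k,
\]
which is a countable union of finite sets, hence countable. Since countable sets have Hausdorff dimension zero, countable stability applied to $\overline{F} = F \cup (\overline{F}\setminus F)$ yields $\Haus \overline{F} \leq \max\{\Haus F, 0\} = \Haus F$. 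Combining both directions with the bound from the previous paragraph gives the desired inequality for $\Haus F$.

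I expect no genuinely hard step in this proof; it is essentially a cosmetic passage between the closed- and open-interval versions of the construction. The only point requiring a moment's care is the verification that the separation hypothesis survives the closure operation, which is immediate because $\delta_k>0$ forces the closures of adjacent open intervals to remain disjoint with the same gap. All other ingredients — monotonicity, countable stability, and the vanishing of the Hausdorff dimension on countable sets — are recorded earlier in Section~\ref{Section-preparations}.
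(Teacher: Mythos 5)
Your proof is correct and follows essentially the same route as the paper's: pass between the open and closed constructions via closures (the paper phrases it as taking interiors of closed intervals, but it is the same reduction), apply Falconer's Lemma~\ref{Lemma-lower_bounds_for_dimension_origin} after a similarity, and absorb the countable set of endpoints using the zero Hausdorff dimension of countable sets together with monotonicity and countable stability.
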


\section{Lemmas I}\label{Section-proofs_of_lemmas}
We write $O(1)$ for a bounded quantity. If this bound depends only on some parameters $a_1,\ldots, a_n$, then for instance we write $O_{a_1,a_2,\ldots, a_n}(1)$. As is customary, we often abbreviate $O(1)X$ and $O_{a_1,\ldots, a_n}(1)X$ to $O(X)$ and $O_{a_1,\ldots, a_n}(X)$ respectively for a non-negative quantity $X$. We also say $f(X) \ll g(X)$ and $f(X) \ll_{a_1,\ldots, a_n} g(X)$  as $f(X)=O(g(X))$ and $f(X)=O_{a_1,\ldots, a_n}(g(X))$ respectively, where $g(X)$ is non-negative. 

Let us consider the solvability of the equation \eqref{Equation-ax+by=cz}. In this and next sections, we fix $a,b,c,d\in \mathbb{N}$ with $d\geq 2$ and $\beta,\gamma\in \mathbb{R}$ with $d<\beta<\gamma<d+1$. Unless it causes confusion, we do not indicate their dependence hereinafter. Take a large parameter $x_0=x_0(a,b,c,d,\beta,\gamma)>0$. For all integer $x\geq x_0$, we define
\begin{equation*}
J_{a,b,c}(x)=\left\{ 
\begin{array}{ll}
\displaystyle  \left(\left(\frac{b}{cx^2\log x}+\frac{a}{c} \right)^{1/\gamma} x,\ \left(a/c\right)^{1/\beta}x \right)_{\mathbb{N}} \setminus \{nx \colon n\in \mathbb{N} \} & \text{ if }c<a,\\
\\
\displaystyle \left(\left(\frac{a}{c-b(x^2\log x)^{-1}}\right)^{1/\beta}x,\ \left(a/c\right)^{1/\gamma} x \right)_\mathbb{N} & \text{ if }a< c,\\
\\
\displaystyle \left(2^{1/\gamma}\left(x+\frac{1}{x\lceil \log x\rceil}\right),\ 2^{1/\beta}x \right)_\mathbb{N} & \text{ if }a=b=c,  \\ 
\end{array}
\right.
\end{equation*}
where let $(s,t)_{\mathbb{N}}$ denote $(s,t)\cap \mathbb{N}$. Note that $J_{a,b,c}(x)$ is non-empty if $x_0$ is sufficiently large. In the case when $a=c$ and $b\neq c$, $J_{a,b,c}(x)$ is not defined above, however this case comes down to the case when $a\neq c$ by switching the roles of $(a,x)$ and $(b,y)$. Thus the three cases in the definition of $J_{a,b,c}(x)$ are essential.

\begin{lemma}\label{Lemma-alpha(x,z)_1_2}
Assume that $a\neq c$. Then there exists $C>0$ such that for all integer $x\geq x_0$ and for all $z \in J_{a,b,c}(x)$, we can find $\alpha=\alpha(x,z)\in (\beta, \gamma)$ so that $ax^{\alpha}+b=cz^{\alpha}$, and
\begin{equation}\label{Equation-asymptotic_for_alpha_1_2}
\left|\alpha-\frac{\log(a/c)}{\log(z/x)}\right| \leq \frac{C}{x^2\log x}.
\end{equation}
\end{lemma}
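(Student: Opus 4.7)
The plan is to apply the intermediate value theorem (IVT) to $f(\alpha) := ax^\alpha + b - cz^\alpha$ on $[\beta, \gamma]$, and then use the mean value theorem (MVT) to obtain the quantitative bound on $|\alpha - \alpha_0|$, where $\alpha_0 := \log(a/c)/\log(z/x)$. Observe that $(z/x)^{\alpha_0} = a/c$ by definition, so $cz^{\alpha_0} = ax^{\alpha_0}$ and hence $f(\alpha_0) = b > 0$.

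\textbf{Step 1 (existence).} I would check that $f$ changes sign between $\beta$ and $\gamma$, using each of the two cases separately. In the case $c < a$, the upper bound $z < (a/c)^{1/\beta}x$ from $J_{a,b,c}(x)$ yields $cz^\beta < ax^\beta$, so $f(\beta) > b > 0$. The lower bound $z^\gamma > (a/c + b/(cx^2 \log x))x^\gamma$ yields
\[
f(\gamma) < b - \frac{bx^{\gamma-2}}{\log x} < 0
\]
once $x_0$ is large enough, since $\gamma > d \ge 2$. In the case $a < c$, the bounds play symmetric roles: the lower bound $z^\beta > (a/(c-b(x^2\log x)^{-1}))x^\beta$ gives $cz^\beta > ax^\beta + bz^\beta/(x^2\log x)$, so $f(\beta) < b - c' x^{\beta-2}/\log x < 0$; the upper bound $z < (a/c)^{1/\gamma}x$ gives $f(\gamma) > b > 0$. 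In either case, the IVT produces $\alpha \in (\beta, \gamma)$ with $f(\alpha) = 0$.

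\textbf{Step 2 (quantitative bound).} Since $f(\alpha) = 0$ and $f(\alpha_0) = b$, the MVT gives some $\xi$ between $\alpha$ and $\alpha_0$ with $b = f'(\xi)(\alpha_0 - \alpha)$. A direct differentiation yields
\[
f'(\alpha) = x^\alpha\bigl(a\log x - c(z/x)^\alpha \log z\bigr),
\]
and at $\alpha_0$ this simplifies to $-ax^{\alpha_0}\log(z/x)$. From the explicit endpoints of $J_{a,b,c}(x)$ one sees that $|\log(z/x)|$ is bounded below by a positive constant depending only on $a, c, \beta, \gamma$ (it is trapped between two limits close to $\log(a/c)/\gamma$ and $\log(a/c)/\beta$). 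Consequently $|f'(\alpha)| \gg_{a,c,\beta,\gamma} x^\alpha$ for $\alpha$ in any fixed small neighborhood of $\alpha_0$ (valid for $\alpha = \xi$ once $x_0$ is large), and since $\alpha_0 > \beta > 2$ one has $x^{\alpha_0} \gg_{\beta} x^2\log x$ for large $x$. Combining,
\[
|\alpha - \alpha_0| = \frac{b}{|f'(\xi)|} \leq \frac{C}{x^2 \log x},
\]
as claimed.

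The main bookkeeping point I would be careful about is ensuring the root $\alpha$ lies strictly inside $(\beta, \gamma)$: the endpoints of $J_{a,b,c}(x)$ are engineered so that $\alpha_0$ already sits at distance at least of order $1/(x^2\log x)$ from each of $\beta$ and $\gamma$ (on the correct side), and the perturbation $|\alpha - \alpha_0|$ from Step 2 is of the same order or smaller, so inflating $x_0$ by a constant factor forces $\alpha \in (\beta, \gamma)$ rather than at the boundary. Aside from this uniformity check, the proof is essentially IVT plus a one-line MVT estimate.
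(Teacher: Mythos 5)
Your Step~1 is correct and is close in spirit to the paper's: the sign computations at the endpoints are the same as those the paper performs, though the paper applies the intermediate value theorem on a shorter interval $[\alpha_0,\alpha_1]$ with an explicitly engineered second endpoint rather than on $[\beta,\gamma]$.

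The gap is in Step~2. You write $b = f(\alpha_0)-f(\alpha) = f'(\xi)(\alpha_0-\alpha)$ and want $|f'(\xi)|\gg x^2\log x$, which you justify by asserting $|f'|\gg x^\alpha$ ``for $\alpha$ in a small neighborhood of $\alpha_0$ (valid for $\alpha=\xi$ once $x_0$ is large).'' But $\xi$ lies between $\alpha$ and $\alpha_0$, and whether it lies in a small neighborhood of $\alpha_0$ is precisely the conclusion you are after: as written the argument is circular. In the case $c<a$ the circularity is easy to break, since the root has $\alpha>\alpha_0$ and for $\xi\geq\alpha_0$ one has $c(z/x)^\xi\geq a$, hence $f'(\xi) = ax^\xi\log x - cz^\xi\log z \leq -ax^\xi\log(z/x)$, giving $|f'(\xi)|\gg x^\beta \gg x^2\log x$ with no a priori control on $\xi$. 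In the case $a<c$ the analogous manipulation only produces an \emph{upper} bound $f'(\xi)\leq ax^\xi\log(x/z)$, and obtaining a matching lower bound without already knowing $\alpha_0-\xi\ll(\log x)^{-2}$ requires genuinely more work (a two-stage bootstrap via a bound on $f''$, or a direct estimate such as $z^{-t}-x^{-t}\geq x^{-t}\,t\log(x/z)$ applied at $t=\alpha_0-\alpha$). The paper sidesteps all of this by defining a second test value $\alpha_1$ (resp.\ $\alpha_1'$) at distance $O(1/(x^2\log x))$ from $\alpha_0$ by an explicit one-line computation, verifying $f(\alpha_1)\leq 0$, and applying IVT on $[\alpha_0,\alpha_1]$, so the quantitative bound is built into the choice of interval. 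Also note that the worry in your last paragraph is vacuous: since $f(\beta)\neq 0\neq f(\gamma)$, any root the IVT yields on $[\beta,\gamma]$ is automatically strictly interior, so no ``inflating $x_0$'' is needed for that.
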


\begin{proof}
Fix any $x\geq x_0$ and $z \in J_{a,b,c}(x)$. For all $u\in \mathbb{R}$, define a continuous function $f(u)=ax^u+b-cz^u$.
We prove the claim by considering two cases $a>c$ and $c>a$. \\

\textbf{Step~1.} In the case $a>c$, let
\[
\alpha_0=\frac{\log(a/c)}{\log(z/x)},\quad \alpha_1=\frac{\log(a/c+b/(cx^2\log x)) }{\log(z/x)}.
\]
Then, $z\in J_{a,b,c}(x)$ implies $\beta< \alpha_0<\alpha_1< \gamma$. It follows that $f(\alpha_0)= b>0$.  If necessary, by taking a larger $x_0$, we have
\[
f(\alpha_{1}) = x^{\alpha_1} (a+bx^{-\alpha_1}-c(z/x)^{\alpha_1})\leq x^{\alpha_1}(a+b/(x^2 \log x)- c(z/x)^{\alpha_1} )= 0.
\]
Therefore, by the intermediate value theorem, there exists a zero $\alpha = \alpha(x,z)$ of $f$ such that $\beta<\alpha_0\leq \alpha\leq \alpha_1< \gamma$. We obtain \eqref{Equation-asymptotic_for_alpha_1_2} since  
\[
|\alpha_1-\alpha_0|=\frac{ \log (1+ b/(ax^2\log x))}{\log (z/x)}\ll_{a,b,c} \frac{\log (a/c)}{\log (z/x)}\cdot \frac{1}{x^2\log x} \leq \frac{\gamma}{x^2\log x}.
\]

\textbf{Step~2.} In the case $c>a$, let
\[
\alpha_0=\frac{\log(c/a)}{\log(x/z)},\quad \alpha_{1}'=\frac{\log(c/a-b/(ax^2\log x))}{\log (x/z)}.
\]
By $z\in J_{a,b,c}(x)$, $\beta< \alpha_{1}'  <\alpha_0< \gamma$ and $x\ll_{a,b,c,\beta,\gamma} z$ hold. Then by the calculation in Step~1, $f(\alpha_0)\geq 0$. Further $x\ll z$ implies $z^{-\alpha_{1}'}\leq z^{-\beta}\ll x^{-\beta}$. Thus if $x_0$ is sufficiently large, we have $z^{-\alpha_{1}'}\leq 1/(x^2\log x)$, which yields that
\[
f(\alpha_{1}')\leq z^{\alpha_{1}'} (a(x/z)^{\alpha_{1}'} +bz^{-\alpha_{1}'} -c)\leq  z^{\alpha_{1}'} (a(x/z)^{\alpha_{1}'} +b/(x^2\log x) -c)= 0.
\]
Therefore, by the intermediate value theorem, there exists a zero $\alpha=\alpha(x,z)$ of $f$ such that $\beta< \alpha_{1}'\leq \alpha \leq \alpha_0< \gamma$. We obtain \eqref{Equation-asymptotic_for_alpha_1_2} since
\[
|\alpha_0-\alpha_{1}'|=\frac{ |\log (1- b/(cx^2\log x))|}{\log (x/z)}\ll_{a,b,c} \frac{\log (c/a)}{\log (x/z)}\cdot \frac{1}{x^2\log x} \leq \frac{\gamma}{x^2\log x}.
\]
\end{proof}

\begin{lemma}\label{Lemma-alpha(x,z)}
There exists $C>0$ such that for all integer $x\geq x_0$ and $z\in J_{1,1,1}(x)$, we can find
$\alpha=\alpha(x,z)\in( \beta, \gamma)$ so that
$x^\alpha +\left(x+ (x\lceil \log x\rceil )^{-1}  \right)^\alpha =z^\alpha$,
and
\begin{equation}\label{Equation-asymptotic_formula_for_alpha}
\left|\alpha- \frac{\log 2}{\log (z/x)}\right| \leq \frac{C}{x^2\log x }.
\end{equation}
\end{lemma}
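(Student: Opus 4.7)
The plan is to mirror the two-case argument of Lemma~\ref{Lemma-alpha(x,z)_1_2}, writing $y = x + 1/(x\lceil \log x \rceil)$ and studying the continuous function $f(\alpha) = x^\alpha + y^\alpha - z^\alpha$. I will exhibit two explicit values $\alpha_0 < \alpha_1$ with $f(\alpha_0) > 0$ and $f(\alpha_1) < 0$, and then apply the intermediate value theorem inside $(\beta,\gamma)$.

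For the lower endpoint I would take $\alpha_0 = \log 2/\log(z/x)$, the exact solution of $2x^\alpha = z^\alpha$. Plugging in gives $f(\alpha_0) = y^{\alpha_0} - x^{\alpha_0} > 0$ since $y > x$. For the upper endpoint I would take $\alpha_1 = \log 2/\log(z/y)$, the exact solution of $2y^\alpha = z^\alpha$, so $f(\alpha_1) = x^{\alpha_1} - y^{\alpha_1} < 0$. The inequality $y > x$ yields $\log(z/y) < \log(z/x)$, hence $\alpha_0 < \alpha_1$. The definition $J_{1,1,1}(x) = (2^{1/\gamma}y,\ 2^{1/\beta}x)_\mathbb{N}$ translates directly into $z < 2^{1/\beta}x$ (equivalently $\alpha_0 > \beta$) and $z > 2^{1/\gamma}y$ (equivalently $\alpha_1 < \gamma$), so the zero $\alpha = \alpha(x,z)$ produced by IVT lies in the required interval $(\beta,\gamma)$.

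For the quantitative estimate I would compute
\[
\alpha_1 - \alpha_0 = \log 2 \cdot \frac{\log(z/x) - \log(z/y)}{\log(z/x) \log(z/y)} = \log 2 \cdot \frac{\log(y/x)}{\log(z/x) \log(z/y)}.
\]
Using $\log(1+t) \leq t$, the numerator satisfies $\log(y/x) = \log(1 + 1/(x^2\lceil \log x \rceil)) \leq 1/(x^2 \log x)$. Both denominators are bounded away from $0$ and from $\infty$ by constants depending only on $\beta,\gamma$ (since $\log(z/x),\log(z/y) \in [\log 2/\gamma, \log 2/\beta]$). Combining, $\alpha_1 - \alpha_0 \ll_{\beta,\gamma} 1/(x^2 \log x)$, and since $\alpha_0 \leq \alpha \leq \alpha_1$, the desired bound \eqref{Equation-asymptotic_formula_for_alpha} follows.

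There is no serious obstacle here; the only calibration that matters is that the perturbation $y - x = 1/(x\lceil \log x \rceil)$ is chosen small enough to guarantee $\log(y/x) = O(1/(x^2 \log x))$, matching the error rate in Lemma~\ref{Lemma-alpha(x,z)_1_2}, while being large enough that the interval $J_{1,1,1}(x)$ is nonempty for integer $x \geq x_0$ and that $\alpha_1 < \gamma$ holds throughout it.
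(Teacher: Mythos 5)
Your proof is correct and follows essentially the same argument as the paper: same choice of endpoints $\alpha_0 = \log 2/\log(z/x)$ and $\alpha_1 = \log 2/\log(z/y)$, same sign check and intermediate value theorem, and the same estimate $|\alpha_1-\alpha_0| \ll \log(y/x) \leq 1/(x^2\log x)$. The only cosmetic difference is that the paper writes the sign check as $f(\alpha_0) > z^{\alpha_0}(\tfrac12+\tfrac12-1)=0$ while you simplify to $f(\alpha_0)=y^{\alpha_0}-x^{\alpha_0}>0$; the content is identical.
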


\begin{proof}
Take any $x\geq x_0$ and $z\in J_{1,1,1}(x)$. For all $u\in \mathbb{R}$, define a continuous function
$f(u) =x^u +(x+(x\lceil \log x\rceil )^{-1} )^u -z^u$, and set
\[
\alpha_0= \frac{\log 2}{\log (z/x)}  ,\quad \alpha_1=\frac{\log 2}{\log\left(\cfrac{z}{x+ (x\lceil \log x\rceil)^{-1}}\right)}.
\]
By $z\in J_{1,1,1}(x)$, $\beta< \alpha_0<\alpha_1< \gamma$ holds. By the definitions of $\alpha_0$ and $\alpha_1$, we have
\[
f(\alpha_0) >z^{\alpha_0} \left(\frac{1}{2}+\frac{1}{2}-1\right)=0,\quad
f(\alpha_1) <z^{\alpha_1} \left(\frac{1}{2}+\frac{1}{2}-1  \right)=0.  
\]
Therefore, by the intermediate value theorem, there exists a zero $\alpha=\alpha(x,z)$ of $f$ such that $\alpha_0\leq \alpha \leq \alpha_1$. Further, we conclude \eqref{Equation-asymptotic_formula_for_alpha} since  
\[
|\alpha_1-\alpha_0|\leq \frac{\gamma^2}{\log 2} \log\left(1+\frac{1}{x^2\log x }\right)\leq \frac{\gamma^2}{\log 2}\cdot \frac{1}{x^2\log x }.
\]
\end{proof}

\begin{lemma}\label{Lemma-n_0}
Let $\epsilon>0$ be an arbitrarily small real number. For all $X,Y,Z\in \mathbb{N}$, and $\alpha\in \mathbb{R}$ with $\beta<\alpha<\gamma$, if we have 
\begin{equation}\label{Equation-XYZalpha}
aX^\alpha+bY^{\alpha}=cZ^{\alpha},
\end{equation}
then there exists $n_0\in \mathbb{N}$ such that 
\begin{gather}\label{Equation-n_0_and_fractional_part}
a\lfloor (n_0X)^\alpha \rfloor +b\lfloor(n_0Y)^\alpha\rfloor =c\lfloor (n_0Z)^\alpha \rfloor, \\ \label{Inequality-fractional_parts}
\max(\{(n_0 X)^\alpha\}, \{(n_0Y)^\alpha\}, \{(n_0Z)^\alpha\} )<\frac{1}{2},\\ \label{Inequality-n_0}
n_0 \ll_{\epsilon}  (X+Y)^{\gamma^2/((2+\{\beta\}-2^{1-\lfloor \beta\rfloor} )(2-\{\gamma\})) +\epsilon }. 
\end{gather}
\end{lemma}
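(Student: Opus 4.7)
\emph{Reduction.} Multiplying $aX^\alpha+bY^\alpha=cZ^\alpha$ by $n_0^\alpha$ and subtracting the desired floor identity \eqref{Equation-n_0_and_fractional_part} shows that \eqref{Equation-n_0_and_fractional_part} is equivalent to $a\{(n_0X)^\alpha\}+b\{(n_0Y)^\alpha\}-c\{(n_0Z)^\alpha\}=0$. Set $\delta_0:=1/(2(a+b+c))$. If all three fractional parts lie in $[0,\delta_0)$, the left-hand side is an integer in $(-1/2,1/2)$, hence vanishes; this yields both \eqref{Equation-n_0_and_fractional_part} and \eqref{Inequality-fractional_parts}. It therefore suffices to find $n_0\leq N$ with $(\{(n_0X)^\alpha\},\{(n_0Y)^\alpha\},\{(n_0Z)^\alpha\})\in[0,\delta_0)^3$ for some $N$ of the order stated in \eqref{Inequality-n_0}.

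\emph{Two-dimensional discrepancy.} Since $a\{(nX)^\alpha\}+b\{(nY)^\alpha\}-c\{(nZ)^\alpha\}\in\mathbb{Z}$ for every $n$, the triple lies on a $2$-dimensional subset of $[0,1)^3$, and its full $3$D discrepancy is always bounded below by a positive constant. I would therefore work instead with the $2$D discrepancy of $\mathbf{y}_n:=(\{(nX)^\alpha\},\{(nY)^\alpha\})$ in $[0,1)^2$: among the $n_0\leq N$ with $\mathbf{y}_{n_0}\in[0,\delta_0)^2$, the integer $c\{(n_0Z)^\alpha\}-a\{(n_0X)^\alpha\}-b\{(n_0Y)^\alpha\}$ takes a value in $\{0,1,\ldots,c-1\}$, and a pigeonhole argument retains at least a proportion $1/c$ of them for which this integer equals $0$, forcing $\{(n_0Z)^\alpha\}<\delta_0$. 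Applying the Erd\H{o}s--Tur\'an--Koksma inequality \eqref{Inequality-Koksma_Szusz} with $d=2$ reduces the task to bounding
\[
S_{(k_1,k_2)}(N)=\sum_{n=1}^N e\!\bigl((k_1X^\alpha+k_2Y^\alpha)\,n^\alpha\bigr),\qquad 0<\|(k_1,k_2)\|_\infty\leq K.
\]

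\emph{Van der Corput estimate.} Take $k:=\lfloor\beta\rfloor+2\geq 4$, so that $k-\alpha\in(1,2)$. Splitting $[1,N]$ dyadically and applying Lemma~\ref{Lemma-van_der_Corput} to the phase $(k_1X^\alpha+k_2Y^\alpha)n^\alpha$ (whose $k$-th derivative is comparable to $|k_1X^\alpha+k_2Y^\alpha|\,n^{\alpha-k}$) gives
\[
|S_{(k_1,k_2)}(N)|\ll N\lambda^{1/(2^k-2)}+N^{1-2^{2-k}}\lambda^{-1/(2^k-2)},\qquad \lambda\asymp|k_1X^\alpha+k_2Y^\alpha|\,N^{\alpha-k}.
\]
Generically $|k_1X^\alpha+k_2Y^\alpha|\asymp\|(k_1,k_2)\|_\infty(X+Y)^\alpha$.

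\emph{Optimization.} Finally, I would choose $K$ as a small power of $N$ and take $N$ minimal so that the resulting $2$D discrepancy is below $\delta_0^2/(2c)$. Using $\alpha<\gamma$ together with $k-\alpha>k-\gamma=2-\{\gamma\}$, the first van der Corput term contributes a factor $(X+Y)^{\gamma/(2-\{\gamma\})}$ to the lower bound on $N$. The saving $N^{-2^{2-k}}=N^{-2^{-\lfloor\beta\rfloor}}$ in the second term, balanced against $k-\alpha<k-\beta$ and the optimal choice of $K$, produces the complementary factor $(X+Y)^{\gamma/(2+\{\beta\}-2^{1-\lfloor\beta\rfloor})}$. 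Multiplying these gives $N\ll_\epsilon(X+Y)^{E+\epsilon}$ with $E$ as in \eqref{Inequality-n_0}, after absorbing $\log$ losses into $\epsilon$. The main obstacle is the delicate balancing required for this optimization; a secondary one is handling exceptional $(k_1,k_2)$ where $|k_1X^\alpha+k_2Y^\alpha|$ is anomalously small (arising when $(X/Y)^\alpha$ is close to a rational with small denominator), which requires either a diophantine lower bound or a separate direct estimate.
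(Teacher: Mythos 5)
Your overall plan---a two-dimensional Erd\H{o}s--Tur\'an--Koksma bound coupled with van der Corput's $k$-th derivative test at $k=\lfloor\beta\rfloor+2$, applied to a linear phase $(h_1X^\alpha+h_2Y^\alpha)n^\alpha$---is the same framework the paper uses. But there is a genuine gap in the reduction. After locating $n_0$ with $\{(n_0X)^\alpha\},\{(n_0Y)^\alpha\}<\delta_0$, the integer $m:=c\{(n_0Z)^\alpha\}-a\{(n_0X)^\alpha\}-b\{(n_0Y)^\alpha\}$ does lie in $\{0,\dots,c-1\}$, but pigeonhole only tells you that \emph{some} value in this set is hit with frequency at least $1/c$; it does not give you $m=0$. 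If the well-represented value is, say, $m=1$, then $c\lfloor(n_0Z)^\alpha\rfloor=a\lfloor(n_0X)^\alpha\rfloor+b\lfloor(n_0Y)^\alpha\rfloor-1$ and \eqref{Equation-n_0_and_fractional_part} fails for every such $n_0$. To force $m=0$ you must control $a\lfloor(n_0X)^\alpha\rfloor+b\lfloor(n_0Y)^\alpha\rfloor\pmod c$, and smallness of the two fractional parts alone says nothing about this residue. The paper sidesteps the issue by running ETK on the sequence $\bigl(\{(nX)^\alpha/c\},\{(nY)^\alpha/c\}\bigr)$ rather than on $\bigl(\{(nX)^\alpha\},\{(nY)^\alpha\}\bigr)$: demanding $\{(nX)^\alpha/c\}<1/(4ac)$ and $\{(nY)^\alpha/c\}<1/(4bc)$ automatically yields $\{(nX)^\alpha\}=c\{(nX)^\alpha/c\}<1/(4a)$ (and similarly for $Y$), and then $\{(nZ)^\alpha\}=\{a(nX)^\alpha/c+b(nY)^\alpha/c\}\leq a\{(nX)^\alpha/c\}+b\{(nY)^\alpha/c\}<1/(2c)$, so both $m=0$ and \eqref{Inequality-fractional_parts} follow with no pigeonhole at all. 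The exponential sums become $\sum e(L(h_1,h_2)n^\alpha)$ with $L(h_1,h_2)=(h_1X^\alpha+h_2Y^\alpha)/c$, which is what you wrote up to the factor $1/c$.

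The resonance case you flag as ``secondary'' is in fact where the exponent in \eqref{Inequality-n_0} comes from: the non-resonant estimate alone would give the strictly smaller exponent $\gamma/(2-\{\gamma\})$, and your proposal to ``multiply'' the two factors $(X+Y)^{\gamma/(2-\{\gamma\})}$ and $(X+Y)^{\gamma/(2+\{\beta\}-2^{1-\lfloor\beta\rfloor})}$ has no logical basis (two lower-bound constraints on $N$ combine by taking a maximum, not a product), even though it happens to reproduce the exponent numerologically. What actually happens in the paper is this: if $|L(h_1,h_2)|<N^{-\psi}$ for some $0<\max(|h_1|,|h_2|)\leq N^\xi$ with $\psi\approx 2+\{\beta\}-2^{1-\lfloor\beta\rfloor}$, then $h_1$ and $h_2$ have opposite signs (otherwise $|L|\geq 1/c$), say $h_1<0<h_2$; writing $h_1'=-h_1$ and $\theta=L(h_1,h_2)/h_2$ one has the identity $(nY)^\alpha/c=h_1'\,(nX)^\alpha/(ch_2)+n^\alpha\theta$. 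Restricting to $n\leq N^{\psi/\alpha}/(8bc)^{1/\alpha}$ makes $n^\alpha|\theta|\leq 1/(8bc)$, so the single one-dimensional condition $\{(nX)^\alpha/(ch_2)\}<1/(8abcN^\xi)$ already places both $\{(nX)^\alpha/c\}$ and $\{(nY)^\alpha/c\}$ in the required boxes; one then runs ETK in dimension one on that shortened range. The shrinkage of the range by the factor $N^{\psi/\alpha}$ is exactly what inflates $\gamma/(2-\{\gamma\})$ by the additional factor $\alpha/\psi\approx\gamma/(2+\{\beta\}-2^{1-\lfloor\beta\rfloor})$, yielding the stated exponent. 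So your sketch has the right toolkit, but the pigeonhole step is broken and the resonance case, which you defer, contains most of the real work.
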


\begin{proof} 
Choose $X,Y,Z\in \mathbb{N}$ and $\alpha$ with $\beta<\alpha<\gamma$ satisfying \eqref{Equation-XYZalpha}. For all $n\in \mathbb{N}$,
\begin{align*}
c\lfloor (nZ)^\alpha\rfloor=c(nZ)^\alpha -c\{(nZ)^\alpha\} =a\lfloor (nX)^\alpha \rfloor +b\lfloor (nY)^\alpha \rfloor+\delta(n),
\end{align*}
where define $\delta(n)=a\{(nX)^\alpha\} +b\{(nY)^\alpha\}- c\{(nZ)^\alpha\}$. Let
\[
A=\left\{n\in \mathbb{N} \colon |\delta(n)|<1,\ \max(\{(n X)^\alpha\}, \{(nY)^\alpha\}, \{(nZ)^\alpha\} )<\frac{1}{2} \right\},
\]
then any $n\in A$ satisfies \eqref{Equation-n_0_and_fractional_part} and \eqref{Inequality-fractional_parts}. Let us show the existence of $n\in A$ satisfying \eqref{Inequality-n_0}. Take a sufficiently large parameter $R=R(a,b,c,d,\beta,\gamma,\epsilon)$, and put
\begin{equation}\label{Equation-N}
N=\left\lceil R  (X+Y)^{\gamma^2/((2+\{\beta\}-2^{1-\lfloor \beta\rfloor} )(2-\{\gamma\})) +\epsilon} \right\rceil.
\end{equation}
Take a small $\xi = \xi(d, \beta, \gamma, \varepsilon) > 0$, and put $\psi = \{\beta\} - 2 + (2^{d+2}-2)(1/2^d - 2\xi)$. Since this is reformulated to
\begin{equation}\label{Equation-psi}
\psi=2+\{\beta\}-2^{1-\lfloor\beta \rfloor}+O(\xi),
\end{equation}
we have $0<\psi<\alpha$ for a small enough $\xi$. Moreover, we let $L(h_1, h_2) = (h_1 X^\alpha + h_2 Y^\alpha)/c$.\\

\textbf{Step~1.} We firstly discuss the case when 
\begin{equation}\label{Inequality-assumption_on_L}
|L(h_1,h_2)| \geq N^{-\psi}
\end{equation}
holds for all $h_1,h_2 \in \mathbb{Z}$ with $0<\max\{|h_1|,|h_2|\}\leq  N^\xi$. In this case, define
\begin{equation}\label{Inequality-(nx)alpha}
A_1=\left\{n\in \mathbb{N} \colon 0\leq  \{(nX)^\alpha/c\} < \frac{1}{4ac},\quad 0\leq  \{(nY)^\alpha/c\} < \frac{1}{4bc}\right\}.
\end{equation}
Then we have $A_1\subseteq A$. In fact, take any $n\in A_1$. We see that
\begin{equation}\label{Equation-(nX)}
(nX)^\alpha=c\lfloor (nX)^\alpha /c\rfloor +c\{(nX)^\alpha/c \}.
\end{equation}
Since the first term on the right-hand side of \eqref{Equation-(nX)} is an integer and the second term belongs to $[0,1)$ by $n\in A_1$, we get $\{(nX)^\alpha\}=c\{(nX)^\alpha/c\} $. This yields that $\{(nX)^\alpha\} < 1/(4a)$. Similarly, $\{(nY)^\alpha\} <1/(4b)$ holds. Further, 
\[
\{(nZ)^\alpha \}=\{ a(nX)^\alpha /c +b(nY)^\alpha/c\} \leq a\{ (nX)^\alpha /c\}+b\{(nY)^\alpha/c\}< \frac{1}{2c}.
\]
Hence we have
\[
|\delta(n)|\leq a\{(nX)^\alpha\}+b\{(nY)^\alpha\} +c\{(nZ)^\alpha\}<  \frac{1}{4}+\frac{1}{4}+\frac{1}{2}=1.
\]
Therefore $A_1\subseteq A$ holds.

We now evaluate the distribution of $A_1$. Let $D_1(N)$ be the discrepancy of the sequence $((nX)^\alpha/c,\ (nY)^\alpha/c)_{N < n \leq 2N}$. The inequality \eqref{Inequality-Koksma_Szusz} with $K= \lfloor N^{\xi} \rfloor$ implies that 
\[
D_1(N)\ll N^{-\xi} +\sum_{0<\|(h_1, h_2)\|_\infty \leq N^\xi} \frac{1}{\nu (h_1,h_2)} \left|\frac{1}{N}  \sum_{N<n\leq 2N} e(L(h_1,h_2)n^\alpha)  \right|. 
\]
For all $u\in \mathbb{R}$, define $f(u)=L(h_1,h_2)u^\alpha$. For each $N<u\leq 2N$,
\[
|L(h_1, h_2)| N^{\alpha -(d+2)} \ll  |f^{(d+2)} (u) |\ll | L(h_1, h_2)| N^{\alpha -(d+2)}.
\]
Therefore Lemma~\ref{Lemma-van_der_Corput} with $k=d+2$ yields that
\begin{align*}
&\frac{1}{N}  \sum_{N<n\leq 2N} e(L(h_1, h_2)n^\alpha) \\
&\ll  ( |L(h_1, h_2)| N^{\alpha -(d+2)})^{1/(2^{d+2}-2)} +\frac{( |L(h_1, h_2)| N^{\alpha -(d+2)})^{-1/(2^{d+2}-2)}}{N^{1/2^d}} \\
&\ll  ( L(N^\xi, N^\xi) N^{\{\gamma\}-2 })^{1/(2^{d+2}-2)} +\frac{ N^{(2-\{\beta\}+\psi)/(2^{d+2}-2) }}{N^{1/2^d}}.
\end{align*}
By the definition of $\psi$, it follows that $(2-\{\beta\}+\psi)/(2^{d+2}-2)-1/2^d=-2\xi$. Then  
\begin{align*}
\frac{1}{N}  \sum_{N<n\leq 2N} e(L(h_1, h_2)n^\alpha) \ll   \left( (X+Y)^\gamma N^{\{\gamma\}-2+\xi} \right)^{1/(2^{d+2}-2)}
+N^{-2\xi}.
\end{align*}
Therefore, since $\log N\ll_t N^t$ for all $t>0$, we have  
\begin{equation}\label{Inequality-D_1(N)}
D_1(N)\ll_{\xi} N^{-\xi} +  \left( (X+Y)^\gamma N^{\{\gamma\}-2+2\xi} \right)^{1/(2^{d+2}-2)}.
\end{equation}
Let $E_1(N)$ be the right-hand side of \eqref{Inequality-D_1(N)}. By the definition of the discrepancy, 
\[
\frac{\# (A_1\cap (N,2N])}{N}= \frac{1}{16abc^2} +O_{\xi}\left(E_1(N)\right). 
\]
By \eqref{Equation-N}, we have
\begin{equation}\label{Inequality-(X+Y)}
(X+Y)^\gamma N^{\{\gamma\}-2+2\xi } \ll R^{\{\gamma\}-2+2\xi } (X+Y)^{e}. 
\end{equation}
Here the exponent $e$ of $(X+Y)$ on the right-hand side of \eqref{Inequality-(X+Y)} is negative since
\begin{align*}
 e &=\gamma+(\{\gamma\}-2+2\xi)\left(\frac{\gamma^2}{(2+\{\beta\}-2^{1-\lfloor\beta\rfloor})(2-\{\gamma\}) } +\epsilon\right)\\
	&= \gamma \left(1 - \frac{\gamma}{2+\{\beta\} - 2^{1-\lfloor \beta\rfloor}} \right) - \varepsilon(2-\{\gamma\}) + O(\xi)\\
	&\leq \gamma\cdot \frac{2+\{\beta\}-\gamma }{2+\{\beta\}-2^{1-\lfloor \beta\rfloor }} -\epsilon(2-\{\gamma\}) +O(\xi) <0
\end{align*}
holds for a small enough $\xi$. This yields that
\[
E_1(N) \ll_\xi R^{-\xi} +R^{ (\{\gamma\}-2+2\xi)/(2^{d+2}-2) }.
\]
 Therefore if $\xi$ is sufficiently small and $R$ is sufficiently large, then the following holds:   
\begin{align*}
\frac{1}{16abc^2} +O_{\xi}\left(E_1(N)\right)\geq \frac{1}{32abc^2}.
\end{align*}
Hence, in this case, $\#(A \cap (N,2N]) \geq \#(A_1\cap (N,2N])\geq N/(32abc^2)>0$, which implies that there exists $n_0\in A$ satisfying \eqref{Inequality-n_0}.\\

\textbf{Step 2.} We next discuss the case when \eqref{Inequality-assumption_on_L} is false, that is to say, there exist $h_1,h_2\in \mathbb{Z}$ with $0<\max\{|h_1|,|h_2|\}\leq N^{\xi}$ such that
\begin{equation}\label{Inequality-assumption_on_L_2}
|L(h_1,h_2)|<N^{-\psi}. 
\end{equation}
We observe that $h_1$ has the opposite sign of $h_2$, since if not, $1/c \leq  |L(h_1,h_2)| <N^{-\psi} $ holds, which causes a contradiction when $R$ is sufficiently large. Thus we may assume that $h_1<0<h_2$ by multiplying the both sides of \eqref{Inequality-assumption_on_L_2} by $|(-1)|$ if necessary. Let $h_1'=-h_1$, and $\theta=L(h_1,h_2)/h_2$. 

In the case $\theta\geq 0$, by letting 
\begin{equation}
A_2=\left\{n\in [1,N^{\psi/\alpha}/(8bc)^{1/\alpha}]\cap \mathbb{N} \colon 0\leq \{(nX)^\alpha/(ch_2) \}<\frac{1}{8abcN^\xi} \right\}, 
\end{equation}
$A_2\subseteq A$ holds. In fact, suppose $n\in A_2$. Then 
$(nX)^\alpha/c= h_2 \lfloor (nX)^\alpha/(ch_2)\rfloor +h_2 \{(nX)^\alpha/(ch_2) \}$, of which the first term is an integer and the second term belongs to $[0,1)$. This yields that $\{(nX)^\alpha/c\}= h_2 \{(nX)^\alpha/(ch_2) \} $. Thus we obtain $0\leq \{(nX)^\alpha/c\}<1/(4ac)$. Further, since
\begin{gather*}
(nY)^\alpha/c=\frac{h'_1}{ch_2}(nX)^\alpha +n^\alpha \theta=h'_1 \lfloor (nX)^\alpha/(ch_2)\rfloor+h'_1\{(nX)^\alpha/(ch_2) \}+n^\alpha \theta,\\
h'_1 \lfloor (nX)^\alpha/(ch_2)\rfloor\in \mathbb{Z}, \quad 0\leq h'_1\{(nX)^\alpha/(ch_2) \}+n^\alpha \theta<\frac{1}{8bc}+\frac{1}{8bc}=\frac{1}{4bc},
\end{gather*}
we have $\{(nY)^\alpha/c\}= h'_1\{(nX)^\alpha/(ch_2) \}+n^\alpha \theta$ and $0\leq \{(nY)^\alpha/c \}< 1/(4bc)$. Hence, we obtain $A_2 \subseteq A_1 \subseteq A$. 

We next evaluate the distribution of $A_2$. Let $V=N^{\psi/\alpha}/(2(8bc)^{1/\alpha})$, and $D_2(N)$ be the discrepancy of the sequence $((nX)^\alpha/(ch_2))_{V<n\leq 2V}$. Then by the inequality \eqref{Inequality-Koksma_Szusz}, 
\begin{align*}
D_2(N) &\ll \frac{1}{N^{2\xi}}+\sum_{0< |h|\leq N^{2\xi}} \frac{1}{|h|} \left|\frac{1}{V} \sum_{V<n\leq 2V} e((h/(ch_2))X^\alpha n^\alpha) \right|.
\end{align*}
From Lemma~\ref{Lemma-van_der_Corput} with $k=d+2$ and $K=\lfloor N^{2\xi} \rfloor$, the following holds:
\begin{align*}
D_2(N)&\ll  \frac{1}{N^{2\xi}}+\sum_{0<|h|\leq N^{2\xi}} \frac{1}{|h|} 
\left( \left(\frac{|h|X^\alpha}{ch_2} V^{\alpha-d-2} \right)^{1/(2^{d+2}-2)} +\frac{ \left(\frac{|h|X^\alpha}{ch_2} V^{\alpha-d-2} \right)^{-1/(2^{d+2}-2)} }{ V^{1/2^d}}\right)\\
&\ll \frac{1}{N^{2\xi}}+\left(X^\gamma N^{2\xi}V^{\{\gamma\}-2}\right)^{1/(2^{d+2}-2)}
+N^\xi V^{(-1+2^{-d})/(2^{d+1}-1)}\\
&\ll \frac{1}{N^{2\xi}}+\left(X^\gamma N^{2\xi+\psi(\{\gamma\}-2)/\gamma}\right)^{1/(2^{d+2}-2)}
+N^{\xi+\psi(-1+2^{-d})/(\gamma(2^{d+1}-1))}.
\end{align*}
Let $E_2(N)$ be the most right-hand side. Now by \eqref{Equation-N}, we have
\begin{equation}\label{Inequality-(X+Y)2nd}
 X^\gamma N^{2\xi +\psi (\{\gamma\}-2  )/\gamma }\ll R^{2\xi+\psi (\{\gamma\}-2)/\gamma } (X+Y)^{e'}.
\end{equation}
The exponent $e'$ of $(X+Y)$ on the right-hand side of \eqref{Inequality-(X+Y)2nd} is equal to
\begin{align*}
e'&= \gamma+ \left(2\xi +\frac{\psi}{\gamma}(\{\gamma\}-2) \right)\left(\frac{\gamma^2}{(2+\{\beta\}-2^{1-\lfloor \beta\rfloor}) (2-\{\gamma\})}+\epsilon \right)\\
& =\gamma - \gamma\cdot \frac{2+\{\beta\}-2^{1-\lfloor \beta\rfloor}+O(\xi)}{2+\{\beta\}-2^{1-\lfloor \beta\rfloor}}-\epsilon\cdot \frac{\psi}{\gamma} (2-\{\gamma\})+O(\xi) \\
&=-\epsilon\cdot \frac{\psi}{\gamma} (2-\{\gamma\})+O(\xi).
\end{align*}
Here we use \eqref{Equation-psi}.
This yields that for a small enough $\xi$,
\[
	E_2(N) \ll N^{-2\xi}+(R^{2\xi+\psi(\{\gamma\}-2)/\gamma }(X+Y)^{e'})^{1/(2^{d+2}-2)} + N^{\xi +\psi (-1+2^{-d})/(\gamma (2^{d+1}-1)) }\ll N^{-2\xi}.
\]
Therefore, if necessary, by making $\xi$ smaller and $R$ larger, we get 
\[
\frac{\#(A_2\cap (V,2V])}{V}=\frac{1}{8abcN^\xi} +O( E_2(N))\geq \frac{1}{16abcN^\xi}>0.
\]
Hence by $\psi<\alpha$, there exists $n_0\in A$ such that  
\[
 n_0\ll_{\epsilon}   ((X+Y)^{\psi/\alpha})^{ \gamma^2/((2+\{\beta\}-2^{1-\lfloor \beta\rfloor} )(2-\{\gamma\})) +\epsilon }\leq    (X+Y)^{\gamma^2/((2+\{\beta\}-2^{1-\lfloor \beta\rfloor} )(2-\{\gamma\})) +\epsilon  },
\]
which is the inequality \eqref{Inequality-n_0}. In the case $\theta<0$, let $\theta'= L(h_1,h_2)/h_1 >0$.  By switching the roles of $(\theta, X^\alpha)$ and $(\theta', Y^{\alpha})$ and repeating a similar discussion to the case $\theta\geq 0$, we also find $n_0\in A$ satisfying \eqref{Inequality-n_0}.
\end{proof}

\begin{lemma}\label{Lemma-eta}
For all $\alpha>0$ and $X,Y,Z\in \mathbb{N}$, define
\[
\eta(\alpha, X,Y,Z)= \min\left\{ \frac{\log{\left( (\lfloor W ^\alpha \rfloor+1 )W^{-\alpha} \right)} }{\log W}\colon W=X,Y,Z \right\}.
\]
For all $\alpha>0$ and $X,Y,Z\in \mathbb{N}$, if $a\lfloor X^\alpha \rfloor +b\lfloor Y^\alpha \rfloor=c\lfloor Z^\alpha \rfloor$ holds, then for all $\tau\in (\alpha, \alpha+\eta(\alpha,X,Y,Z))$, we have
\[
a\lfloor X^\tau \rfloor +b\lfloor Y^\tau \rfloor=c\lfloor Z^\tau \rfloor.
\]
\end{lemma}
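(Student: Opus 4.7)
The plan is to reduce the lemma to the assertion that, for $\tau$ slightly larger than $\alpha$, the three floor values $\lfloor X^\tau\rfloor$, $\lfloor Y^\tau\rfloor$, $\lfloor Z^\tau\rfloor$ do not change from their values at $\tau = \alpha$. Once this is in hand, substituting into the hypothesized identity $a\lfloor X^\alpha\rfloor + b\lfloor Y^\alpha\rfloor = c\lfloor Z^\alpha\rfloor$ immediately gives the conclusion. So the whole task is an unravelling of the definition of $\eta(\alpha,X,Y,Z)$.

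Concretely, I would fix $W \in \{X,Y,Z\}$ and $\tau > \alpha$, and show that $\lfloor W^\tau\rfloor = \lfloor W^\alpha\rfloor$ provided $\tau < \alpha + \eta(\alpha,X,Y,Z)$. For $W \geq 2$, on the one hand $W^\tau \geq W^\alpha$ forces $\lfloor W^\tau\rfloor \geq \lfloor W^\alpha\rfloor$; on the other hand the reverse inequality $\lfloor W^\tau\rfloor \leq \lfloor W^\alpha\rfloor$ is equivalent to $W^\tau < \lfloor W^\alpha\rfloor + 1$. Since $\log W > 0$, taking logarithms and rearranging shows that this is precisely
\[
\tau < \alpha + \frac{\log\!\left((\lfloor W^\alpha\rfloor + 1) W^{-\alpha}\right)}{\log W},
\]
which is guaranteed by $\tau < \alpha + \eta(\alpha,X,Y,Z)$, because $\eta$ is defined as the minimum of these quantities as $W$ ranges over $\{X,Y,Z\}$. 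The degenerate case $W = 1$ is trivial since $W^\tau = 1$ for every $\tau$, so $\lfloor W^\tau\rfloor = \lfloor W^\alpha\rfloor = 1$ automatically; formally, the corresponding term in the definition of $\eta$ should be read as $+\infty$ and does not affect the minimum.

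Applying this to $W = X, Y, Z$ simultaneously shows that for $\tau \in (\alpha, \alpha + \eta(\alpha,X,Y,Z))$ we have $\lfloor X^\tau\rfloor = \lfloor X^\alpha\rfloor$, $\lfloor Y^\tau\rfloor = \lfloor Y^\alpha\rfloor$, and $\lfloor Z^\tau\rfloor = \lfloor Z^\alpha\rfloor$, so the hypothesized equation is literally preserved. There is no genuine obstacle here: the only point that requires any care is the degenerate case $W = 1$ where $\log W = 0$, which is handled by the convention above. Everything else is a direct manipulation of the defining inequalities.
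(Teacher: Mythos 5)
Your proof is correct and follows essentially the same approach as the paper, which simply asserts in one line that $\lfloor W^\alpha\rfloor = \lfloor W^\tau\rfloor$ for $W \in \{X,Y,Z\}$ and $\tau$ in the stated interval; you spell out the elementary monotonicity-plus-logarithm argument behind that assertion, and your remark about the degenerate case $W=1$ (where the $\eta$-term should be read as $+\infty$) is a sensible clarification that the paper leaves implicit.
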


\begin{proof}

The claim is clear since we observe that
\begin{gather*}
\lfloor X^\alpha\rfloor=  \lfloor X^\tau\rfloor,\quad \lfloor Y^\alpha\rfloor=  \lfloor Y^\tau\rfloor,\quad  \lfloor Z^\alpha \rfloor=  \lfloor Z^\tau\rfloor
\end{gather*}
for all  $\tau\in (\alpha, \alpha+\eta(\alpha,X,Y,Z))$.
\end{proof}

\section{Lemmas II}\label{Section-dimension}
Let $x_0>0$ be a large parameter. For each $x\geq x_0$, let $K(x)\subseteq \mathbb{N}$ be a non-empty finite set. For each $x\geq x_0$ and $z\in K(x)$, let $\theta (x,z)$ and $\ell(x,z)$ be positive real numbers, and define an interval $I(x,z)=(\theta(x,z),\ \theta(x,z)+\ell(x,z))$. For each $x\geq x_0$, define
\[
G_x=\bigcup_{z\in K(x)} I(x,z).
\]
Let us consider the following conditions:
\begin{itemize}
\item[(C1)] for all integer $x\geq x_0$, $K(x)$ does not contain the multiples of $x$;
\item[(C2)] for all integers $x\geq x_0$ and $z\in K(x)$, the gap between $z$ and the minimum $z'\in K(x)$ such that $z<z'$ is at most $2$;
\item[(C3)] there exists $Q_1>0$ such that for all $x\geq x_0$,
\[
\max \left(\inf \{|\beta-\alpha| \colon \alpha\in G_x\},\ \inf\{|\gamma+x^{-2}-\alpha| \colon  \alpha \in  G_x\} \right)\leq Q_1x^{-1};
\]
\item[(C4)] there exists a real number $\kappa \in (0,\infty) \setminus\{1\}$ such that for all $x\geq x_0$ and $z\in K(x)$,
\[
\theta(x,z)=\frac{\log \kappa}{\log(z/x)}+O \left(\frac{1}{x^2\log x}\right);  
\]
\item[(C5)] there exist $Q_2,Q_3>0$ and $q>2$ such that for all $x\geq x_0$ and $z\in K(x)$,
\[
 Q_2 x^{-q} \leq  \ell(x,z) \leq Q_3x^{-\beta};
\]
\item[(C6)] for all integer $x\geq x_0$, $G_x\subseteq (\beta,\gamma+x^{-2})$ holds;
\item[(C7)] for all integers $x\geq x_0$ and $z\in K(x)$, there exists a pairwise distinct tuple $(X(x,z), Y(x,z), Z(x,z)) \in \mathbb{N}^3$ such that for all $\tau \in I(x,z)$,
\[
a\lfloor X(x,z)^\tau \rfloor +b\lfloor Y(x,z)^\tau \rfloor =c\lfloor Z(x,z)^\tau \rfloor,\quad X(x,z)\geq x.
\]
\end{itemize}

\begin{proposition} \label{Lemma-hausdorff_dimension} Suppose that there exist $x_0$, $K(x)$, $\theta(x,z)$, and $\ell(x,z)$ satisfying $(\mathrm{C}1)$ to $(\mathrm{C}7)$. Then we have
\[
\dim_\mathrm{H}(\{\alpha \in [\beta,\gamma] \colon \text{$ax+by=cz$ is solvable in $\PS(\alpha)$}  \}) \geq \frac{2}{q}.  
\]
\end{proposition}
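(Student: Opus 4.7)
The plan is to apply Lemma~\ref{Lemma-lower_bounds_for_dimension} to a Cantor-like subset of the set in question. The key conceptual point is that, at each level of the construction, I will use the intervals $I(x,z)$ coming from a whole dyadic range of values of $x$ rather than a single value of $x$; this two-parameter family (indexed by $(x,z)$ with $x \in [X_k, 2X_k]$) produces $\asymp X_k^2$ admissible intervals instead of $\asymp X_k$, which is what ultimately delivers the factor of $2$ in the bound $2/q$.

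Choose $E_0 \subset (\beta,\gamma)$ open and a rapidly growing sequence $X_k = \lfloor X_{k-1}^{r_k}\rfloor$ with $r_k \to \infty$. At level $k$, inside each level-$(k-1)$ interval $J$, I include the intervals $I(x,z) \subset J$ with $x \in [X_k, 2X_k]\cap\mathbb{N}$ and $z \in K(x)$, keeping one representative per reduced ratio $z/x$. By (C5) each such interval has length in $[Q_2 X_k^{-q}, Q_3 X_k^{-\beta}]$, so widths tend to $0$; by (C7), every $\tau \in I(x,z)$ yields a solution tuple with $X(x,z) \geq x \geq X_k$, and hence any $\alpha \in F := \bigcap_k E_k$ enjoys infinitely many pairwise distinct solutions in $\PS(\alpha)^3$.

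Two estimates drive the dimension calculation. \emph{Count}: by (C4), $\theta(x,z)$ depends on $(x,z)$, up to an error $O(1/(X_k^2\log X_k))$, only through the ratio $z/x$. The standard equidistribution of Farey fractions asserts that the number of reduced $p/q$ with $q \leq 2X_k$ in a sub-interval of $[\kappa^{1/\gamma},\kappa^{1/\beta}]$ of length $\Delta$ is $\gg \Delta X_k^2$; each is realized by an admissible pair (integer ratios excluded by (C1) are $O(1)$ in number, and the gap-$2$ density of (C2) forces at least one admissible $k$ with $kp \in K(kq)$). Transferring via (C4) and (C6), this gives $\gg L X_k^2$ distinct $\theta$-values in any $\theta$-window of length $L$, and so $m_k \gg X_{k-1}^{-q} X_k^2$, since $L_{k-1} \geq Q_2 X_{k-1}^{-q}$ by (C5). \emph{Separation}: for coprime ratios $p/q \neq p'/q'$ with $q,q' \leq 2X_k$, one has $|p/q - p'/q'| = |pq'-p'q|/(qq') \geq 1/(4X_k^2)$; by (C4) this transfers to $|\theta(x,z)-\theta(x',z')| \gg X_k^{-2}$, and since $\ell(x,z) \leq Q_3 X_k^{-\beta}$ with $\beta>2$, the chosen intervals are pairwise disjoint with $\delta_k \gg X_k^{-2}$.

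Combining, $m_k\delta_k \gg X_{k-1}^{-q}$ and $\log m_k \geq (2r_k - q)\log X_{k-1} - O(1)$. Using the identity $(2r_j - q)\log X_{j-1} = (2 - q/r_j)\log X_j$ and the super-exponential growth of $X_j$, the sum $\sum_{j=1}^{k-1}\log m_j$ equals $(2+o(1))\log X_{k-1}$ (dominated by its $j=k-1$ term once $r_j\to\infty$ rapidly), so Lemma~\ref{Lemma-lower_bounds_for_dimension} produces
\[
\dim_{\mathrm{H}} F \;\geq\; \liminf_{k\to\infty} \frac{\sum_{j=1}^{k-1}\log m_j}{-\log(m_k\delta_k)} \;\geq\; \frac{2}{q}.
\]
The main obstacle will be the Farey-style count of admissible distinct ratios inside a short $\theta$-window, i.e.\ verifying that the lower bound $\gg \Delta X_k^2$ survives the combined restrictions of (C1), (C2), and the boundary effects of $J$; once that is in place, the remaining arithmetic is routine bookkeeping from the hypotheses (C1)--(C7).
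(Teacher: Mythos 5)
Your overall strategy matches the paper's: build a nested Cantor set out of the intervals $I(x,z)$ at rapidly growing dyadic scales, estimate the level counts $m_k$ and gaps $\delta_k$, and feed these into Lemma~\ref{Lemma-lower_bounds_for_dimension}; the factor $2$ does come from the two-parameter family $(x,z)$, and your closing bookkeeping (with $m_k\delta_k\asymp X_{k-1}^{-q}$ and super-exponential growth killing the lower-order terms) is the same as the paper's. The one ingredient you handle differently is the count-and-separate step. The paper restricts the outer variable to primes $p\in(u_k,2u_k]$: (C1) then gives $\gcd(z,p)=1$, so distinct pairs $(p,z)$ automatically give distinct reduced fractions, hence $|zp'-z'p|\ge 1$ and $\theta$-separation $\gg u_k^{-2}$ (Lemma~\ref{Lemma-distance_of_intervals_of_H_U}); the count is then ``intervals per prime'' times ``primes by PNT'', giving $m_k\asymp u_k^2 u_{k-1}^{-q}/\log u_k$, and the $\log u_k$ loss vanishes in the $\liminf$.

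The genuine gap is in your Farey count of admissible ratios. You take all $x\in[X_k,2X_k]$, keep one representative per reduced ratio $z/x$, cite the Farey count of $\gg L X_k^2$ reduced $p/q$ with $q\le 2X_k$ in a window of length $L$, and then claim ``each is realized by an admissible pair.'' That last claim does not follow from (C1)--(C7). For a given reduced $p/q$ you would need some $k$ with $kq\in[X_k,2X_k]$ and $kp\in K(kq)$; (C2) only controls gaps inside each \emph{fixed} $K(x)$ and imposes no coherence between $K(kq)$ across different $k$, so nothing forces $kp$ to land in any of those sets. The parenthetical ``the gap-$2$ density of (C2) forces at least one admissible $k$'' is therefore unjustified. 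The fix is exactly the paper's device: restrict $x$ to primes, so (C1) alone makes $z/x$ reduced and the count becomes $\gg L X_k^2/\log X_k$; this is weaker than your claimed $\gg L X_k^2$ but still yields $2/q$ in the limit. With that substitution, your separation argument (difference of distinct reduced fractions $\ge 1/(4X_k^2)$, transferred through (C4) with the $O(1/(X_k^2\log X_k))$ error being of lower order, and $\ell(x,z)\ll X_k^{-\beta}$ with $\beta>2$ not interfering) and the final computation are sound.
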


\begin{remark} The idea of the proof of Proposition~\ref{Lemma-hausdorff_dimension} comes from the proof of Jarn\'{i}k's theorem in Falconer's book \cite[Theorem~10.3]{Falconer}. Jarn\'{i}k's theorem states that for every $q>2$ the set of $\alpha\in [0,1] $ such that there exist infinitely many $x,z \in \mathbb{N}$ satisfying $|\alpha-z/x|\leq x^{-q}$ has Hausdorff dimension $2/q$.
\end{remark}

The goal of this section is to prove Proposition~\ref{Lemma-hausdorff_dimension}. Suppose that there exist  $x_0$, $K(x)$, $\theta(x,z)$, and $\ell(x,z)$ satisfying the conditions (C1) to (C7), and choose such $x_0$, $K(x)$, $\theta(x,z)$, and $\ell(x,z)$. Take constants $Q_1$, $Q_2$, $Q_3$, $\kappa$, $q$ which appear in the conditions (C3) to (C5). Let $x_1>0$ and $U_1>0$ be large parameters depending on $a$, $b$, $c$, $d$, $\beta$, $\gamma$, $Q_1$, $Q_2$, $Q_3$, $ \kappa$, $x_0$, $q$. We do not indicate the dependence of those parameters, hereinafter. Let $p$ denote a variable running over prime numbers.

\begin{lemma}\label{Lemma-distance_of_intervals}
There exists $B_1>0$ such that for all $p\geq x_1$ and distinct $z,z'\in K(p)$, two intervals $I(p,z)$ and $I(p,z')$ are separated by a gap of at least  $B_1p^{-1}$ if $x_1$ is sufficiently large. 
\end{lemma}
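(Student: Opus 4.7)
The plan is to show that for distinct $z,z' \in K(p)$ the left endpoints $\theta(p,z)$ and $\theta(p,z')$ are already separated by a distance $\gg p^{-1}$, while the interval lengths are $O(p^{-\beta}) = o(p^{-1})$, so the gap between $I(p,z)$ and $I(p,z')$ exceeds $B_1 p^{-1}$ for a suitable $B_1>0$ once $x_1$ is taken large enough.

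First I would localize $z$ relative to $p$. By (C6), $I(p,z) \subset G_p \subset (\beta,\gamma+p^{-2})$, and so $\theta(p,z) \in (\beta,\gamma+p^{-2})$. Combining this with (C4), the quantity $\log\kappa/\log(z/p)$ lies in an interval that is both bounded and bounded away from $0$, uniformly in $p \geq x_1$. Consequently $|\log(z/p)|$ is bounded above and below by positive constants depending only on $\kappa,\beta,\gamma$; in particular $z$ is of the same order of magnitude as $p$. Condition (C1) also ensures $p \notin K(p)$, so $\log(z/p) \neq 0$.

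Next, for $z < z'$ in $K(p)$, (C4) gives
\[
\theta(p,z) - \theta(p,z') \;=\; \log\kappa \cdot \frac{\log(z'/z)}{\log(z/p)\,\log(z'/p)} + O\!\left(\frac{1}{p^{2}\log p}\right),
\]
whose denominator is bounded away from $0$ by the previous paragraph. Since $z' \geq z+1$ and $z = O(p)$, one has $\log(z'/z) \geq \log(1+1/z) \gg p^{-1}$, whence $|\theta(p,z) - \theta(p,z')| \geq c\,p^{-1}$ for some constant $c > 0$ (the error term $O(p^{-2}\log^{-1} p) = o(p^{-1})$ being absorbed for $p$ large). Since (C5) yields $\ell(p,z),\ell(p,z') \leq Q_3 p^{-\beta} = o(p^{-1})$ thanks to $\beta > d \geq 2$, the gap between the two intervals is at least $c\,p^{-1} - o(p^{-1}) \geq B_1 p^{-1}$ for large enough $x_1$ and a suitable $B_1>0$.

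I do not foresee a serious obstacle here; the only minor subtlety is a two-case split based on the sign of $\log\kappa$, which determines whether $\theta(p,\cdot)$ is decreasing or increasing on $K(p)$ and hence which of $I(p,z), I(p,z')$ lies on the left. Both cases reduce to the computation above. Primality of $p$ plays no role in the proof itself; only condition (C1) — which implies $z \neq p$ — is needed.
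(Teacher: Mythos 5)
Your proposal is correct and follows essentially the same route as the paper's proof: derive $\beta/2 \leq \log\kappa/\log(z/p) \leq 2\gamma$ from (C4) and (C6) to get $p \ll z \ll p$, then lower-bound $|\theta(p,z)-\theta(p,z')|$ by $\gg \log(1+1/z) \gg p^{-1}$, and finally subtract the $O(p^{-\beta})$ interval lengths from (C5). The only (minor) difference is that you invoke (C1) to rule out $z=p$, whereas the paper leaves this implicit since $\theta(p,z)$ being finite and in $(\beta,\gamma+p^{-2})$ already forces $\log(z/p)\neq 0$; your remark about the sign of $\log\kappa$ is similarly absorbed by the absolute values and does not change the computation.
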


\begin{proof} 
By the conditions (C4) and (C6), for all $p\geq x_1$ and $z\in K(p)$, we have 
\begin{equation}\label{Inequality-beta_kappa_gamma}
\frac{\beta}{2} \leq \frac{\log \kappa}{\log (z/p)} \leq 2\gamma
\end{equation}
if $x_1$ is sufficiently large. This implies that 
\begin{equation}\label{Inequality-pzp}
p\ll z\ll p.
\end{equation}
By the condition (C4) and the inequalities \eqref{Inequality-beta_kappa_gamma} and \eqref{Inequality-pzp}, there exists $B_0>0$ such that
\begin{align*}
|\theta(p,z)-\theta(p,z')|&=\left|\frac{\log \kappa}{\log \frac{z}{p}}- \frac{\log \kappa}{ \log \frac{z'}{p}}+O\left(\frac{1}{p^{2}\log p}\right) \right|\geq \frac{|\log \kappa||\log \frac{z'}{z}| }{|\log\frac{z}{p}||\log  \frac{z'}{p}|  } + O\left(\frac{1}{p^{2}\log p}\right)\\
&\geq \frac{\beta^2 }{4|\log \kappa|} \log\left(\frac{z+1}{z}\right)  + O\left(\frac{1}{p^{2}\log p}\right)
\geq B_0 p^{-1}
\end{align*} 
for all $p\geq x_1$ and all $z,z'\in K(p)$ with $z< z'$. Further, since  $\ell (p,z) \leq Q_3 p^{-2}$ holds by (C5), there exists $B_1>0$ such that for all $p\geq x_1$ and distinct $z,z'\in K(p)$, two intervals $I(p,z)$ and $I(p,z')$ are separated by a gap of at least  
\begin{equation}\label{Inequality-distance_of_intervals}
B_0p^{-1} - Q_3 p^{-2} \geq B_{1} p^{-1}
\end{equation}
if $x_1$ is sufficiently large. 
\end{proof}

Now we call the open interval $I(p,z)$ $(z\in K(p))$ a basic interval of $G_p$ for all $p \geq x_1$. For each $U\geq U_1$, define
\[
H_U=\bigcup_{\substack{U<p\leq 2U \\ p \text{: prime}} } G_p. 
\]
For all $U<p\leq 2U$, we also call the basic interval of $G_p$ a basic interval of $H_U$. 

\begin{lemma}\label{Lemma-distance_of_intervals_of_H_U}
There exist $B_2,B_3>0$ such that for any $U\geq U_1$, all distinct basic intervals of $H_U$ are separated by gaps of at least $B_2U^{-2}$, and  the length of each basic interval of $H_U$ is at least $B_3U^{-q}$ if $U_1$ is sufficiently large. 
\end{lemma}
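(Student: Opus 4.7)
The plan is to establish the length bound directly from $(\mathrm{C}5)$ and to obtain the separation bound by splitting into two cases according to whether two basic intervals come from the same prime $p$ or from two distinct primes.

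For the length bound, for any basic interval $I(p,z)$ of $H_U$, by $(\mathrm{C}5)$ and $U<p\leq 2U$,
$\ell(p,z)\geq Q_2 p^{-q}\geq Q_2(2U)^{-q}$, so one may simply take $B_3 = Q_2 2^{-q}$. For the separation bound, if two distinct basic intervals $I(p,z)$ and $I(p,z')$ both come from the same prime $p\in(U,2U]$, then by Lemma~\ref{Lemma-distance_of_intervals} (the lemma just proved), they are separated by at least $B_1 p^{-1}\geq B_1(2U)^{-1}$, which dominates $U^{-2}$.

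The real content is the case of two distinct primes $p,p'\in(U,2U]$ with $z\in K(p)$ and $z'\in K(p')$. Using $(\mathrm{C}4)$,
\[
\theta(p,z)-\theta(p',z')=\frac{\log\kappa\cdot\log\!\bigl(\tfrac{z p'}{z'p}\bigr)^{-1}}{\log(z/p)\log(z'/p')}+O\!\left(\frac{1}{U^2\log U}\right),
\]
and, as in the proof of Lemma~\ref{Lemma-distance_of_intervals}, the inequality $\beta/2\leq |\log\kappa|/|\log(z/p)|\leq 2\gamma$ (from $(\mathrm{C}4)$ and $(\mathrm{C}6)$) forces $z\asymp p\asymp U$ and $z'\asymp p'\asymp U$, so the denominator is bounded above and below by constants and $zp'\asymp z'p\asymp U^2$. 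The crucial observation is that $z'p\neq zp'$: otherwise $p\mid zp'$, and since $p,p'$ are distinct primes, $p\mid z$, contradicting $(\mathrm{C}1)$ that $K(p)$ contains no multiple of $p$. Hence $|z'p-zp'|\geq 1$, so $|\log(z'p/(zp'))|\gg U^{-2}$, which yields $|\theta(p,z)-\theta(p',z')|\gg U^{-2}$ after absorbing the $O(U^{-2}/\log U)$ error. Finally, by $(\mathrm{C}5)$ the two interval lengths are each $\leq Q_3 U^{-\beta}=o(U^{-2})$ because $\beta>d\geq 2$, so the gap between the intervals themselves is at least $cU^{-2}-2Q_3 U^{-\beta}\geq B_2 U^{-2}$ for $U_1$ large enough.

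The main obstacle I anticipate is exactly this diophantine step: one needs to observe that the smallness of $\theta(p,z)-\theta(p',z')$ is governed by the integer $z'p-zp'$, and that the coprimality consequence of $(\mathrm{C}1)$ together with the primality of $p,p'$ in the definition of $H_U$ is what rules out $z'p=zp'$. Everything else (bounding $z/p$ by constants, absorbing the $O(1/(p^2\log p))$ error, and checking that $\beta>2$ makes the interval lengths negligible compared to $U^{-2}$) is routine once that observation is in place.
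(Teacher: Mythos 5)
Your proposal is correct and follows essentially the same line of reasoning as the paper: it reduces the different-primes case to the integer $zp'-z'p$, rules out its vanishing via (C1) together with the primality of $p,p'$, and then uses (C5) with $\beta>2$ to absorb the interval lengths — exactly what the paper does (the paper just phrases the middle step via the mean value theorem rather than clearing denominators inside the logarithm). The same-prime case (invoking Lemma~\ref{Lemma-distance_of_intervals}) and the length bound $B_3=Q_2 2^{-q}$ also match the paper.
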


\begin{proof}We take distinct prime numbers $p$ and $p'$ with $U<p, p'\leq 2U$. Then, for all $z\in K(p)$ and $z'\in K(p')$, the condition (C4), the inequality \eqref{Inequality-beta_kappa_gamma}, and the mean value theorem imply that
\begin{align*}
|\theta(p,z)-\theta(p',z')|&\geq \left|\frac{\log \kappa}{\log (z/p)} - \frac{\log \kappa}{\log (z'/p')}\right|+O\left(\frac{1}{U^2\log U}\right) \\
& \geq \frac{\beta ^2}{4|\log \kappa|} \left|\frac{z}{p}-\frac{z'}{p'} \right| \min\left\{\frac{p}{z},\frac{p'}{z'} \right\} +O\left(\frac{1}{U^2\log U}\right).
\end{align*}
We may assume that $p'/z'>p/z$. By the condition (C1), $z$ and $p$ are coprime, which yields that $|zp'-z'p|\geq 1$. Therefore we obtain 
\[
\left|\frac{z}{p}-\frac{z'}{p'} \right|  \min\left\{\frac{p}{z},\frac{p'}{z'} \right\} =\left|\frac{z}{p}-\frac{z'}{p'} \right| \frac{p}{z} \geq \frac{1}{p'z}\gg U^{-2}
\]
by the inequalities \eqref{Inequality-pzp} and $U<p, p'\leq 2U$. Therefore for all $U\geq U_1$, we have
\begin{equation}\label{Inequality-distance_of_intervals_2}
|\theta(p,z)-\theta(p',z')|\gg \frac{1}{U^2}
\end{equation}
if $U_1$ is sufficiently large. Further, for all $U<p\leq 2U$ and $z\in K(p)$, we have $\ell (p,z)\ll  U^{-\beta}$. Hence there exists $D_1>0$ such that for all distinct prime numbers $U<p,p' \leq 2U$, $z\in K(p)$, and $z'\in K(p')$, the intervals $I(p,z)$ and $I(p',z')$ are separated by gaps of at least $D_1U^{-2}$.  By combining with Lemma~\ref{Lemma-distance_of_intervals}, there exists $D_2>0$ such that distinct basic intervals of $H_U$ are separated by gaps of at least $D_2U^{-2}$. Furthermore by (C5), for all $U<p\leq 2U$ and $z\in K(p)$, we have $Q_2\cdot 2^{-q} U^{-q}\leq \ell(p,z)$. In conclusion, we find that all distinct basic intervals of $H_U$ are separated by gaps of at least $B_2U^{-2}$, and has length of at least $B_3U^{-q}$, where we let $B_2=D_2$ and $B_3=Q_2\cdot 2^{-q}$.
\end{proof}

\begin{lemma}\label{Lemma-the_number_of_intervals}
There exists $B_4>0$ such that the following statement holds: for every $U\geq U_1$, if an open interval $I \subset (\beta, \gamma+p^{-2})$ satisfies
\begin{equation}\label{Inequality-I}
3B_4/\diam(I) <U<p\leq 2U,
\end{equation}
then the open interval $I$ completely includes at least
\begin{equation}\label{Inequality-the_number_of_intervals}
\frac{U^2}{6B_4 \log U}\cdot \diam(I)
\end{equation}
basic intervals of $H_U$. 
\end{lemma}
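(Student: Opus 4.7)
The plan is to fix a prime $p \in (U, 2U]$, bound below the number of basic intervals $I(p,z)$ of $G_p$ that lie entirely inside $I$, and then sum over such primes. By the Chebyshev-type prime counting estimate, the number of primes in $(U, 2U]$ is $\gg U/\log U$, and each such $p$ satisfies $p \asymp U$; so it will suffice to prove a per-prime lower bound of the form $\gg p \cdot \diam(I)$, since multiplying then yields the desired $\gg U^2 \diam(I)/\log U$. The constant $B_4$ in the statement will be chosen at the very end to absorb all implicit constants.

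\textbf{Per-prime count.} Fix a prime $p \in (U, 2U]$ and enumerate $K(p)$ in increasing order as $z_1 < z_2 < \cdots$; set $\theta_i = \theta(p, z_i)$. Proceeding as in the lower-bound computation of Lemma~\ref{Lemma-distance_of_intervals} but retaining the upper estimate, (C4) together with the mean value theorem and (C6) (which forces $z/p$ to stay in a fixed compact subset of $(0, \infty) \setminus \{1\}$) yields $|\theta_{i+1} - \theta_i| \leq M/p$ for some absolute $M > 0$; here (C2) enters via $z_{i+1} - z_i \leq 2$. Condition (C3) places $\theta_1$ within $Q_1/p$ of one endpoint of $(\beta, \gamma + p^{-2})$ and the largest $\theta_i$ within $Q_1/p$ of the other, so the finite set $\{\theta_i\}_i$ is an $M/p$-net of $(\beta, \gamma + p^{-2})$ outside two boundary strips each of width at most $Q_1/p$.

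\textbf{Containment, summation, main obstacle.} To upgrade ``$\theta_i \in I$'' to ``$I(p, z_i) \subset I$'' I subtract an additional right buffer of $\ell(p, z_i) \leq Q_3 p^{-\beta}$, which is $o(1/p)$ since $\beta > 2$. The hypothesis $3B_4/\diam(I) < U < p$ gives $\diam(I) > 3B_4/p$, and picking $B_4$ larger than a fixed linear combination of $M$, $Q_1$, and $Q_3$ leaves a usable inner subinterval of $I$ of length at least $\diam(I)/2$, so the number of valid $\theta_i$'s, and hence of basic intervals of $G_p$ contained in $I$, is at least $p \cdot \diam(I)/(2M) - O(1) \gg p \cdot \diam(I)$. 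Summing over the $\gg U/\log U$ primes in $(U, 2U]$ and adjusting $B_4$ so that the resulting absolute constant equals $1/(6B_4)$ completes the proof. The main difficulty is purely bookkeeping: the single constant $B_4$ must simultaneously make the $O(1/p)$ boundary strips negligible compared to $\diam(I)$, absorb the spacing constant $M$ together with the prime-counting constant, and produce the precise factor $1/(6B_4)$ in the conclusion, so the various threads of the estimate have to be coordinated carefully at the end rather than chosen independently.
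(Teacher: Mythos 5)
Your proposal is correct and follows essentially the same route as the paper: you derive a consecutive-gap estimate $|\theta_{i+1}-\theta_i|\ll p^{-1}$ from (C2), (C4), (C6), use (C3) to pin the extreme $\theta_i$'s near the endpoints of $(\beta,\gamma+p^{-2})$, shrink $I$ slightly to absorb boundary strips and the interval lengths $\ell(p,z)\ll p^{-\beta}$, and then sum the resulting per-prime count $\gg p\,\diam(I)$ over the $\gg U/\log U$ primes in $(U,2U]$. The only presentational difference is that the paper fixes $B_4$ up front in a single covering statement (so the factor $1/(6B_4)$ then drops out mechanically from the ``$3$'' in the hypothesis and a Chebyshev constant of $1/2$), whereas you choose $B_4$ at the end to absorb all implicit constants; these are equivalent.
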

\begin{proof}
By (C4), \eqref{Inequality-beta_kappa_gamma}, and \eqref{Inequality-pzp},  there exists $D_3>0$ such that for every $z\in K(p)$ and the minimum $z'\in K(p)$ with $z'>z$,
\begin{align}\nonumber
|\theta(p,z)-\theta(p,z')|&=\left|\frac{\log \kappa}{\log (z/p)}- \frac{\log \kappa}{ \log (z'/p)}+O\left(\frac{1}{p^{2}\log p}\right) \right|\\ \label{Inequality-D_3}
&\leq \frac{4\gamma^2}{|\log \kappa|}\cdot \frac{1}{z}\cdot |z-z'| + O\left(\frac{1}{p^{2}\log p}\right)
\leq D_3 p^{-1}.
\end{align}
Here we apply (C2) when we deduce the last inequality. From (C3) ,(C6) and \eqref{Inequality-D_3}, there exists $B_4>0$ such that
\begin{align*}
(\beta, \gamma+p^{-2}) &\subseteq \left(\beta, \beta+B_4p^{-1} \right) \cup \left(\bigcup_{z\in K(p)} \left(\theta(p,z), \theta(p,z)+B_4p^{-1} \right) \right)\\
&\hspace{10pt}\cup   \left(\gamma+p^{-2}-B_4p^{-1}, \gamma+p^{-2}\right).
\end{align*}
Therefore for all $U\geq U_1$ and $U<p\leq 2U$, any open interval $I \subset (\beta, \gamma+p^{-2})$ with \eqref{Inequality-I} completely includes at least $B_4^{-1}p\cdot \diam(I)-2\geq (3B_4)^{-1}U\cdot \diam(I) $ basic intervals of $G_p$. Hence, by the prime number theorem, the open interval $I$ completely includes at least \eqref{Inequality-the_number_of_intervals} basic intervals of $H_U$ for a large enough $U_1$.
\end{proof}

\begin{proof}[Proof of Proposition~\ref{Lemma-hausdorff_dimension}]  Let $B_3$ and $B_4$ be constants as in Lemma~\ref{Lemma-distance_of_intervals_of_H_U} and Lemma~\ref{Lemma-the_number_of_intervals}, respectively. Let $u_1=U_1$. For every $k=2,3, \ldots$, we put
\[
u_k=\max \{u_{k-1}^k,\ \lceil3(B_4/B_3)u_{k-1}^{q} \rceil,\ u_{k-1}+1 \}, 
\]
and $B_5=B_3/(6B_4)$. Let $E_1$ be the open interval $(\beta,2\gamma)$. For every $k=2,3,\ldots$, let $E_k$ be the union of basic intervals of $H_{u_k}$ which are completely included by $E_{k-1}$. Let $F$ be the intersection of all $E_k$'s.  
Define $m_1=1$, and for $k\geq 2$, define 
\[
m_k=\frac{u_{k}^2}{6B_4\log u_{k} }  B_3 u_{k-1}^{-q}=B_5\frac{u_k^{2}u_{k-1}^{-q}}{\log u_k}. 
\]
Then each $(k-1)$-st level interval of $F$ includes at least $m_k$ $k$-th level intervals. In fact, it follows by applying Lemma~\ref{Lemma-the_number_of_intervals} and Lemma~\ref{Lemma-distance_of_intervals_of_H_U} that each $(k-1)$-st level interval of $F$ has length at least $B_3 u_{k-1}^{-q}$. In addition, by Lemma~\ref{Lemma-distance_of_intervals_of_H_U}, disjoint $k$-th level intervals of $F$ are separated by gaps of at least $\delta_k=B_2u_k^{-2}$. Therefore, Lemma~\ref{Lemma-lower_bounds_for_dimension} implies that 
\begin{align*}
\dim_{\mathrm{H}} F& \geq \liminf_{k\rightarrow \infty}\frac{\log\left(m_1m_2\cdots m_{k-1} \right) }{-\log(\delta_{k}m_{k}) }\\
&=\liminf_{k\rightarrow \infty} \frac{2\log u_{k-1} + \log\left(B_5^{k-2} u_1^{-q} (u_2\cdots u_{k-2})^{2-q}(\log u_2)^{-1}\cdots (\log u_{k-1})^{-1}  \right) }{q\log u_{k-1}+\log k+\log\log u_{k-1}-\log (B_2B_5) }.
\end{align*}
Since we have $u_{k+1}\geq u_{k}$ for all $k\geq 1$ and $ u_{k}=u_{k-1}^k$ for a large enough $k\geq 1$, we have 
\[
\left|\log\left(B_5^{k-2} u_1^{-q} (u_2\cdots u_{k-2})^{2-q}(\log u_2)^{-1}\cdots (\log u_{k-1})^{-1}  \right)\right|
\ \ll \log u_{k-2}.
\]
Therefore, since $\log u_{k-2}/ \log u_{k-1}= 1/(k-1) \rightarrow 0$ as $k\rightarrow \infty$, we get
\[
\dim_{\mathrm{H}} \left( \bigcap_{k=1}^\infty E_k \right)\geq \frac{2}{q}.
\]

We finally show that for any $\tau \in F$, the equation $ax+by=cz$ is solvable in $\PS(\tau)$ and $\tau\in [\beta,\gamma]$. If this claim is true, we get the conclusion of Proposition~\ref{Lemma-hausdorff_dimension} by the monotonicity of the Hausdorff dimension. 

Take any $\tau \in F$. It is clear that $\tau \in [\beta,\gamma]$ since the condition (C6) yields $H_{u_k}\subseteq (\beta, \gamma+u_k^{-2})$, which implies $F \subseteq [\beta,\gamma]$. Further, by (C7), for all $k > 1$, there exist a prime number $u_k<p_k\leq 2u_k$ and $z_k\in K(p_k)$ such that we find a pairwise distinct tuple $(X(p_k,z_k), Y(p_k,z_k),Z(p_k,z_k) ) \in \mathbb{N}^3$ such that  
\[
a\lfloor X(p_k,z_k)^\tau \rfloor+b\lfloor Y(p_k,z_k)^\tau \rfloor=c\lfloor Z(p_k,z_k)^\tau \rfloor, \quad  X(p_k,z_k) \geq p_k.
\]
Since $ X(p_k,z_k) \geq p_k\geq u_k\rightarrow \infty$ as $k\rightarrow \infty$,  the equation $ax+by=cz$ is solvable in $\PS(\tau)$.
\end{proof}

\section{Proof of Theorem~\ref{Theorem-fractal_dimension}} \label{Section-proof_of_main_theorem}

Fix any $a,b,c\in \mathbb{N}$. Let $\epsilon>0$ be an arbitrarily small real number. Let $d=\lfloor s \rfloor $ and choose real numbers $\beta,\gamma$ with $d\leq s< \beta<\gamma<\min\{t ,d+1\} $. By the monotonicity of the Hausdorff dimension, we have
\begin{align}\nonumber
&\dim_\mathrm{H}(\{\alpha \in [s,t] \colon \text{$ax+by=cz$ is solvable in $\PS(\alpha)$}  \}) \\ \label{Inequality-mono_hausdorff}
&\geq  \dim_\mathrm{H}(\{\alpha \in [\beta,\gamma] \colon \text{$ax+by=cz$ is solvable in $\PS(\alpha)$}  \}).
\end{align}

Take $\alpha(x,z)$ as in Lemma~\ref{Lemma-alpha(x,z)_1_2} and Lemma~\ref{Lemma-alpha(x,z)}. Let $K(x)=J_{a,b,c}(x)$, $\theta(x,z)=\alpha(x,z)$. We give $\ell(x,z)$ later. Let us check the conditions (C1) to (C7), and apply Proposition~\ref{Lemma-hausdorff_dimension}. \\

\textbf{Step~1.} In the case $a>c$, by Lemma~\ref{Lemma-alpha(x,z)_1_2}, for all $x\geq x_0$ and $z\in J_{a,b,c}(x)$, 
\[
ax^{\alpha(x,z)}+b=cz^{\alpha(x,z)}
\] 
holds. Thus by Lemma~\ref{Lemma-n_0}, there exists $n_0\in \mathbb{N}$ such that
\begin{gather}\label{Equation-xyzn_0}
a\lfloor (n_0x)^\alpha \rfloor +b\lfloor n_0^{\alpha} \rfloor =c\lfloor(n_0z)^\alpha \rfloor,\\ \label{x,y,z<1/2}
 \max(\{(n_0x)^\alpha\}, \{(n_0)^\alpha\}, \{(n_0z)^\alpha\})<1/2,\\ \label{Inequality-n_0_a>c}
 n_0 \ll_{\epsilon}  x^{\gamma^2/((2+\{\beta\}-2^{1-\lfloor \beta\rfloor} )(2-\{\gamma\})) +\epsilon  }. 
\end{gather}
Define $\eta$ as in Lemma~\ref{Lemma-eta}. Let $\ell(x,z)=\eta(\alpha(x,z), n_0x, n_0, n_0z)$. The condition (C1) is clear from the definition of $J_{a,b,c}(x)$. The condition (C2) is also clear since we find at most one multiple of $x$ in any $3$-consective integers if $x_0\geq 3$. Lemma~\ref{Lemma-alpha(x,z)_1_2} implies (C4). By Lemma~\ref{Lemma-eta}, for each $x\geq x_0$ and $z\in J_{a,b,c}(x)$, each $\tau\in (\alpha(x,z),\alpha(x,z) +\ell(x,z))$ satisfies
\[
a\lfloor (n_0x)^\tau \rfloor +b\lfloor n_0^{\tau} \rfloor =c\lfloor(n_0z)^\tau \rfloor, \quad n_0x\geq x.
\]
Therefore we have (C7). Let us show (C3), (C5), (C6).

We show (C3). Let $x$ be an integer with $x\geq x_0$. For each $i\in \{1,2\}$, let
\[
z_{1,i}=\left\lfloor \left(\frac{b}{cx^2\log x}+\frac{a}{c}\right)^{1/\gamma}  x \right\rfloor+i,\quad 
z_{2,i}=\lfloor (a/c)^{1/\beta} x\rfloor -i.
\]
Remark that $J_{a,b,c}(x)$ does not contain multiples of $x$. Thus we do not know whether $z_{1,i} ,z_{2,i}\in J_{a,b,c}(x)$ for each $i\in \{1,2\}$. However, by (C2), there exist $i_1,i_2\in \{1,2\}$ such that $z_{1,i_1}, z_{2,i_2}\in J_{a,b,c}(x)$.  Lemma~\ref{Lemma-alpha(x,z)_1_2} implies that
\[
 \alpha(x,z_{1,i_1}) = \frac{\log (a/c)}{\log(z_{1,i_1}/x) } +O\left(\frac{1}{x^2\log x}\right).
\]
Here we have
\begin{align*}
\log(z_{1,i_1}/x)&=\log\left(  \left(\frac{b}{cx^2\log x}+\frac{a}{c}\right)^{1/\gamma} +O(x^{-1})\right)\\
&=\frac{1}{\gamma} \log(a/c) +\log\left(1+O\left(\frac{b}{a\gamma x^2\log x}\right)+O(x^{-1}) \right)\\
&=\frac{1}{\gamma} \log(a/c) +O(x^{-1}).
\end{align*}
Therefore
\[
 \alpha(x,z_{1,i_1}) = \frac{\log (a/c)}{ \frac{1}{\gamma} \log(a/c) +O(x^{-1}) } +O\left(\frac{1}{x^2\log x}\right)= \gamma+O(x^{-1}).
\]
Similarly, we have $\alpha(x,z_{2,i_2})=\beta+O( x^{-1})$. Hence we obtain (C3).

We next show (C5). For all $x\geq x_0$ and $z\in J_{a,b,c}(x)$, $x< z$ holds by the definition of $J_{a,b,c}(x)$. Hence we obtain
\[
\ell(x,z)=\eta(\alpha(x,z), n_0x, n_0, n_0z)=\frac{\log{\left( (\lfloor W^\alpha \rfloor+1 )W^{-\alpha} \right)}}{\log W},
\]
where $W$ is one of $n_0x, n_0$, or $n_0z$. By $\beta<\alpha(x,z)$, we have $\ell(x,z)\leq \log (1+(n_0 x)^{-\beta}) \leq x^{-\beta}
$.  Further, by the facts \eqref{x,y,z<1/2}, \eqref{Inequality-n_0_a>c}, and $1 < x < z$, we have
\[
\ell(x,z)\geq \frac{\log(1+2^{-1} W^{-\alpha}) } {\log W}\gg \frac{1}{ (n_0z)^\gamma \log (n_0z) }\gg_{\epsilon} x^{-q},
\]
where let 
\[
q=q(\beta, \gamma,\epsilon)=(\gamma+\epsilon)\left( \frac{\gamma^2}{(2+\{\beta\}-2^{1-\lfloor \beta\rfloor} )(2-\{\gamma\})} +1 + \epsilon\right). 
\]
Therefore (C5) holds. The remaining condition (C6) is clear since $\beta <\alpha(x,z)<\gamma$ and $\alpha(x,z)+\ell(x,z)<\gamma+x^{-2}$ by (C5).\\

\textbf{Step~2.} In the case $c>a$, define $n_0$, $\ell(x,z)$, $q(\beta, \gamma,\epsilon)$ by the same way in Step~1. The condition (C1) is clear since $z<x$ by the definition of $J_{a,b,c}(x)$. The condition (C2) is also clear since $J_{a,b,c}(x)$ forms a set of consecutive integers. Lemma~\ref{Lemma-alpha(x,z)_1_2} implies (C4). Similarly to the discussion in Step~1, we have (C5), (C6), and (C7). 
Let us show the remaining condition (C3). Let $x$ be an integer with $x\geq x_0$. Let
\[
z_{1}=\left\lfloor \left(\frac{a}{c-b(x^2\log x)^{-1}}\right)^{1/\beta}  x \right\rfloor+1,\quad 
z_{2}=\lfloor (a/c)^{1/\gamma} x\rfloor -1.
\]
We observe that $z_1,z_2\in J_{a,b,c}(x)$ if $x_0$ is sufficiently large. Lemma~\ref{Lemma-alpha(x,z)_1_2} implies that
$\alpha(x,z_1) = \beta+O \left( x^{-1} \right)$ and $\alpha(x,z_2) =\gamma+O \left( x^{-1} \right)$. Therefore we have (C3).\\

\textbf{Step~3.} In the case $a=b=c$, without loss of generality, we may assume that $a=b=c=1$. By Lemma~\ref{Lemma-alpha(x,z)}, for all $x\geq x_0$ and $z\in J_{1,1,1}(x)$, by letting $X=X(x,z)=x^2 \lceil \log x \rceil$, $Y=Y(x,z)=x^2\lceil \log x\rceil+1$, $Z=Z(x,z)=zx\lceil \log x \rceil$, we have
\[
 X^{\alpha(x,z)}+Y^{\alpha(x,z)}=Z^{\alpha(x,z)}.
\]
Therefore, from Lemma~\ref{Lemma-n_0}, there exists $n_0\in \mathbb{N}$ such that
\begin{gather} \nonumber
\lfloor (n_0X)^\alpha \rfloor +\lfloor (n_0Y )^{\alpha} \rfloor =\lfloor(n_0Z)^\alpha \rfloor, \\ \label{X,Y,Z<1/2}
\max(\{(n_0X)^\alpha\}, \{(n_0 Y)^\alpha\}, \{(n_0Z)^\alpha\})<1/2, \\ \nonumber
n_0 \ll_{\epsilon}  (X+Y)^{\gamma^2/((2+\{\beta\}-2^{1-\lfloor \beta\rfloor} )(2-\{\gamma\})) +\epsilon }. 
\end{gather} 
Here by letting $r=r(\gamma,\beta,\epsilon)=\gamma^2/((2+\{\beta\}-2^{1-\lfloor \beta\rfloor} )(2-\{\gamma\})) +\epsilon$, we obtain 
\begin{equation}\label{Inequality-n_0_a=b=c}
n_0 \ll_{\epsilon} x^{(2+\epsilon)r}.
\end{equation}
Let $\ell(x,z)=\eta(\alpha(x,z), n_0X, n_0Y, n_0Z)$.

The condition (C1) is clear since $x<z<2x$ by the definition of $J_{1,1,1}(x)$. The condition (C2) is also clear since $J_{1,1,1}(x)$ forms a set of consecutive integers. Lemma~\ref{Lemma-alpha(x,z)} implies (C4). By Lemma~\ref{Lemma-eta}, for all $x\geq x_0$, $z\in J_{1,1,1}(x)$, each $\tau\in (\alpha(x,z),\alpha(x,z) +\ell(x,z))$ satisfies
\[
\lfloor (n_0X)^\tau \rfloor +\lfloor (n_0Y )^{\tau} \rfloor =\lfloor(n_0Z)^\tau \rfloor,\quad n_0X\geq x.
\]
Therefore (C7) holds. Therefore it suffices to show (C3), (C5), and (C6).

Let us show (C3). Take any integer $x\geq x_0$. Let 
\[
z_{1}=\left\lfloor 2^{1/\gamma}  (x+(x\lceil \log x\rceil)^{-1}) \right\rfloor+1,\quad 
z_{2}=\lfloor 2^{1/\beta} x \rfloor -1.
\]
It follows that $z_{1}, z_{2}\in J_{1,1,1}(x)$ if $x_0$ is sufficiently large. Then Lemma~\ref{Lemma-alpha(x,z)} implies that $\alpha(x,z_{1}) =\gamma+O \left( x^{-1} \right)$ and $\alpha(x,z_2) =\beta+O \left( x^{-1} \right)$. Therefore we have (C3).

We next show (C5). Let $x$ be an integer with $x\geq x_0$ and $z\in J_{1,1,1}(x)$. They are clear that $x<z$ and $X(x,z)< Y(x,z)< Z(x,z)$. Recall that
\[
\ell(x,z)=\eta(\alpha(x,z), n_0X, n_0Y, n_0Z)=\frac{\log{\left( (\lfloor W^\alpha \rfloor+1 )W^{-\alpha} \right)}}{\log W},
\]
where $W$ is one of $n_0 X, n_0 Y$, or $n_0Z$. Therefore, by $\beta<\alpha$, we have
$
\ell(x,z)\leq \log (1+(nZ)^{-\beta})\leq Z^{-\beta} \leq x^{-\beta}.
$
Further, by the facts \eqref{X,Y,Z<1/2} and \eqref{Inequality-n_0_a=b=c}, we obtain
\[
\ell(x,z)\geq \frac{\log(1+2^{-1} W^{-\alpha}) } {\log W} \gg \frac{1}{ (n_0Z)^\gamma \log (n_0Z) }\gg_{\epsilon} x^{-(2+\epsilon)(\gamma+\epsilon)(r+1) }.
\]
Hence, (C5) holds. The condition (C6) is clear since $\beta <\alpha(x,z)<\gamma$ and $\alpha(x,z)+\ell(x,z)<\gamma+x^{-2}$ by (C5).\\

\textbf{Step~4.} By summarizing the above discussion, define
\begin{equation*}
D_{a,b,c}(\beta,\gamma,\epsilon) = 
\left\{
\begin{array}{ll}
\displaystyle \frac{2}{(2+\epsilon)(\gamma+\epsilon)(r(\beta,\gamma,\epsilon)+1)}  & \text{if $a=b=c$,}\\[10pt]
\displaystyle \frac{2}{q(\beta,\gamma,\epsilon)} & \text{otherwise}. 
\end{array}
\right.
\end{equation*}
Step~1, Step~2, Step~3 and Proposition~\ref{Lemma-hausdorff_dimension} imply that  
\[
\dim_\mathrm{H}(\{\alpha \in [\beta,\gamma] \colon \text{$ax+by=cz$ is solvable in $\PS(\alpha)$}  \}) \geq D_{a,b,c}(\beta,\gamma,\epsilon).  
\]
Therefore, by \eqref{Inequality-mono_hausdorff} and by letting $\epsilon\rightarrow +0$, $\gamma \rightarrow \beta$, $\beta\rightarrow s$, we have
\[
\dim_\mathrm{H}(\{\alpha \in [s,t] \colon \text{$ax+by=cz$ is solvable in $\PS(\alpha)$}  \}) \geq D_{a,b,c}(s,s,0).  
\]
By the definitions of $q$ and $r$, we get the conclusion of Theorem~\ref{Theorem-fractal_dimension}.

\section*{Acknowledgement}
The first author was supported by JSPS KAKENHI Grant Number JP20K14292. The second author was supported by JSPS KAKENHI Grant Number JP19J20878.

\bibliographystyle{amsalpha}
\bibliography{references_arXiv}
\end{document}